\newtheorem{theorem}{Theorem}[section]
\newtheorem{lemma}[theorem]{Lemma}
\newtheorem{proposition}[theorem]{Proposition}
\newtheorem{corollary}[theorem]{Corollary}
\newtheorem{defi}[theorem]{Definition\rm}
\theoremstyle{remark}
\newtheorem{remark}{Remark}
\newcommand{\ve}{\varepsilon}
\newcommand{\tori}{\mathbb T}
\newcommand{\jd}{\frac 12}
\newcommand{\deltat}{\partial_t}
\newcommand{\impl}{\Rightarrow}
\newcommand{\pical}{\mathcal{P}}
\newcommand{\lcal}{\mathcal{L}}
\newcommand{\R}{\mathbb{R}}
\newcommand{\ccr}{\mathbb{R}}
\newcommand{\ccrd}{\mathbb{R}^d}
\def\og{\leavevmode\raise.3ex\hbox{$\scriptscriptstyle\langle\!\langle$~}}
\def\fg{\leavevmode\raise.3ex\hbox{~$\!\scriptscriptstyle\,\rangle\!\rangle$}}
\title{Global-in-time regularity via duality\\ for congestion-penalized Mean Field Games}
\author{Adam Prosinski \thanks{University of Oxford, EPSRC CDT in Partial Differential Equations {\tt adam.prosinski@maths.ox.ac.uk} }, Filippo Santambrogio \thanks{Laboratoire de Math\'ematiques d'Orsay, Univ. Paris-Sud, CNRS, Universit\'e Paris-Saclay, 91405 Orsay Cedex, France {\tt filippo.santambrogio@math.u-psud.fr}}}
\begin{document}
\maketitle

\abstract{After a brief introduction to one of the most typical problems in Mean Field Games, the congestion case (where agents pay a cost depending on the density of the regions they visit), and to its variational structure, we consider the question of the regularity of the optimal solutions. A duality argument, used for the first time in a paper by Y. Brenier on incompressible fluid mechanics, and recently applied to MFG  with density constraints, allows to easily get some Sobolev regularity, locally in space and time. In the paper we prove that a careful analysis of the behaviour close to the final time allows to extend the same result including $t=T$. }

%

\section{Introduction}

The theory of Mean Field Games has been introduced some years ago by Lasry and Lions (in \cite{LL06cr1,LL06cr2,LL07mf}, see also \cite{GueLasLio} for a compendium of their results, and \cite{HCMieeeAC06} for a different point of view, by other authors, on this theory) 
to describe the evolution of a population, where each agent has to choose a trajectory in the state space, and has some preferences given in the form of a cost, but  this cost is affected by the other agents through a global mean field effect.

Mean Field Games (MFG for short) are differential games, with a continuum of players, usually all considered indistinguishable and such that any individual is negligible. We typically consider congestion games (i.e. agents try to avoid regions with high concentration), where we look for a Nash equilibrium, to be translated into a system of PDEs. In Section \ref{SecMod} we will see that this system is the coupling of a Hamilton-Jacobi equation on the value function of the optimal control problem of each agent, where the density of the population appears in the Hamiltonian, and of a continuity (transport) equation on the evolution of the population, where the velocity field depends on the gradient of the value function. 

MFG theory is now a very lively topic, and the literature is rapidly growing. Among the references for a general overview of the original developments of this theory, we recommend the videotapes of the 6-years course given by P.-L. Lions at Collège de France \cite{LLperso} and the lecture notes by P. Cardaliaguet \cite{Carnotes}, directly inspired by these courses.

In this paper we will only consider a particular class of MFG, the deterministic model with local coupling, which have a variational structure. We refer to \cite{BenCarSan} for a survey on this part of the theory of MFG.

After a brief introduction to the model, we will turn to its variational formulation, and recall the role of convex duality (Sections \ref{SecMod} and then \ref{SecPrel}). Later, in Section \ref{SecSpace}, we will present a recent method which uses duality to prove regularity results in a certain kind of variational problems. This method has, to the authors' knowledge, first been used in a paper by Y. Brenier on the Incompressible Euler equation in fluid mechanics (\cite{br}: later the results have been improved in \cite{af2}) and then adapted in \cite{Pierre} to the case of MFG with density constraints. It is however much more general, and \cite{LNRegDual} explains how to use it in order to recover, for instance, standard results in elliptic regularity.

In the model case (first studied in \cite{ButJimOud}), which amounts to the variational problem
$$\min\left\{\int_0^T\int_\Omega \jd( m|v|^2+m^2)dxdt+\int_\Omega \Psi(x) m(T,x)dx\;:\;\begin{cases}\partial_t m+\nabla\cdot(mv)=0,\\m(0,x)=\overline{m_0}(x)\end{cases}\right\},$$
we will show how we can provide local $H^1$ regularity for $m$. In particular, easy computations can give local regularity in time and global in space in the easier case where $\Omega=\mathbb T^d$ is the torus (more precisely, time derivatives are locally $L^2$ in $(0,T)$ and space derivatives are locally $L^2$ in $(0,T]$). For domains with boundary, it is possible to adapt the computations and obtain local regularity in space. This being said, the main content of the present paper is the global regularity in time, up to $t=T$ (Section \ref{SecTime}).

\section{Mean Field Games modelling: individual and collective optimization}\label{SecMod}
The goal of the MFG theory is to study the limit case $N \to \infty$ of a non-cooperative game of $N$ players who, over a fixed length of time $[0,T]$, move along curves $x: [0,T] \rightarrow \Omega$, trying to minimize the quantity
$$\int_0^T \left(\frac{|x_i'(t)|^2}{2}+g_i(x_1(t),\dots,x_N(t)))\right)dt + \Psi_i(x_i(T)).$$
The respective terms represent the costs in terms of kinetic energy, of the congestion induced by the other players, and the endpoint preferences. Indistinguishability of players means that the functions $\Psi_i$ are independent of $i$ and that the functions $g_i$ have a symmetric structure: 
$$g_i(x_1(t),\dots,x_N(t)))=g\left(x_i,\frac{1}{N}\sum_{j = 1}^N \delta_{x_j}\right).$$
With $g$ in this form it is quite easy to see what the limiting problem $N \rightarrow \infty$ should look like. All we do is to replace the discrete sum of Dirac delta measures by a general measure $m$ and give the instantaneous cost as $g(x,m)$. However, we note here that the relation of the two problems (finite and infinite number of players) is not within the scope of this paper. In particular, we do not prove any convergence results (we refer to \cite{CarDelLasLio} for some results in this direction, and to \cite{GueLasLio} for a general presentation) and proceed straight ahead with the analysis of the case of infinite number of players. 

Clearly, the cost function $g$ determines the nature of the problem. In this paper we consider a particular class of such functions, corresponding to a deterministic model with congestive local cost. That is, we want $g(x,m)$ to only depend on the behaviour of $m$ in a neighbourhood of $x$. In particular, if $m$ has some sort of a density $m(x)$ we want $g$ to be of the form $g(m(x))$, where $g: [0,\infty) \rightarrow [0, \infty]$ is increasing.

We note here that, in terms of regularity, the local congestion case is the most intriguing one. If the interaction is non-local (of the form $g((K*m)(x))$ for an interaction kernel $K$, so that the effective density perceived by the agents is of the form $\int K(x-y)m(y)dy$) then it automatically provides more compactness and regularity, results which are not available for local costs. We cite \cite{carda} as the first paper providing rigorous definitions and results for the local case. On the other hand, the non-local case is widely discussed in \cite{Carnotes}.

Now, for the strict formulation assume that we have a population of players, state of which is described by a time-dependent family of densities $m_t(\cdot)$. The players' actions are represented by curves $x(t)$ meant to solve
\begin{equation*}\label{cost}
\min\;\int_0^T \left(\frac{|x'(t)|^2}{2}+g(m_t(x(t)))\right)dt + \Psi(x(T)),
\end{equation*}
with fixed initial point $x(0)$. Here $g$ is increasing, which means that the players will try to avoid overcrowded regions, an assumption that is quite natural in a number of practical problems.

We begin the analysis by considering the function $h(t,x) := g(m_t(x))$ as a given of the problem. Thus, the players know the cost associated to each position in time and space, which brings us within the framework of standard optimal control theory. An essential tool here is the value function defined by
$$\varphi(t_0,x_0):=\min \left\{\int_{t_0}^T \left(\frac{|x'(t)|^2}{2}+h(t,x)\right)dt + \Psi(x(T)),\; x:[t_0,T]\to\Omega, x(t_0)=x_0\right\}.$$
It is known that $\varphi$ solves the following Hamilton-Jacobi equation
\begin{equation}\label{HJ}
\begin{cases}-\partial_t \varphi(t,x) + \frac 12|\nabla\varphi(t,x)|^2=h(t,x),\\ \varphi(T,x)=\Psi(x).\end{cases}
\end{equation}
Strictly speaking one needs some regularity assumptions on $h$ and $\Psi$ to establish this equation and even then it is usually only satisfied in the viscosity sense. However, our analysis here is purely formal and so we do not go into technical details. The important point is that if one can solve \eqref{HJ} then $\varphi$ may be retrieved from $h$ and $\Psi$. Once the value function is known it is easy to prove that optimal trajectories must solve
$$x'(t)=-\nabla\varphi(t,x(t)).$$
Finally, with the initial distribution $\overline{m_0}$ prescribed and players moving along solutions of the above ODE, a standard calculation (common in fluid mechanics for example) shows that $m$ and $\nabla\varphi$ must solve the so-called continuity equation:
\begin{equation}\label{CE}\partial_t m -\nabla\cdot (m \nabla \varphi)=0.\end{equation}

Going back to MFG, as usually in non-cooperative games, we look for a Nash equilibrium, that is, a configuration of strategies such that, having fixed the choices of others, no single player has an incentive to change his path. There are two possible equivalent approaches to finding such an equilibrium:

\begin{itemize}
\item First, we could consider the densities $m_t$ as initial data, solve the \eqref{HJ} equation with $h(t,x) = g(m_t(x))$ to find the optimal trajectories and then plug them into the continuity equation \eqref{CE}. The configuration is an equilibrium if and only if the solution of \eqref{CE} we get is precisely given by the $m_t$ we started with.
\item Alternatively we could start with players' trajectories as given and use them to determine the density evolution $m_t$ using \eqref{CE}, and insert the corresponding $h(t,x) = g(m_t(x))$ into \eqref{HJ}. Solving \eqref{HJ} for the optimal trajectories we may, once again, conclude that the configuration is an equilibrium if and only if the retrieved trajectories are exactly the ones we started with.  
\end{itemize}
In the end of the day both approaches boil down to finding a solution to the following \eqref{HJ} + \eqref{CE} coupled system:
\begin{equation}\label{MFGgrho}
\begin{cases}-\partial_t\varphi+\frac{|\nabla\varphi|^2}{2}=g(m),\\
		\partial_t m -\nabla\cdot (m \nabla \varphi)=0,\\
		\varphi(T,x)=\Psi(x),\quad m(0,x)=\overline{m_0}(x),\end{cases}
\end{equation}		
where there are terms in each equation depending on the solution of the other one. 

One of the possible strategies of tackling this system is to introduce a global minimization problem on the set of possible density evolutions. We set
  $$\mathcal B(m,v):=\int_0^T\int_\Omega\left( \frac 12m_t|v_t|^2 + G(m_t)\right)dxdt+\int_\Omega \Psi m_Tdx,$$
with $G$ being the anti-derivative of $g$, i.e. $G'=g$ on $\R^+$ with $G(0)=0$, so that in particular $G$ is convex, as its derivative is an increasing function. We seek to minimize $\mathcal{B}$ over all pairs $(m,v)$ satisfying $\partial_t m + \nabla(mv) = 0$, which is the Eulerian way of describing mass evolution. 

The above problem resembles the Benamou-Brenier dynamic formulation for optimal transport (see \cite{bb}). Observe however that in our case we introduce a congestion cost $G$, which is not present in the standard Benamou-Brenier set-up (but has been previously used to model the motion of a crowd in panic, see \cite{ButJimOud}) and we allow the end-time measure $m_T$ to vary. 

From the point of view of application
it is interesting to observe that the quantity $\mathcal{B}$ is not the total cost. Indeed, the terms $\int\int \frac 12m|v|^2$  and $\int \Psi m_T$ correspond to the total expenditure of kinetic energy and the global endpoint cost respectively, but the term $\int \int G(m_t)$ is not the same as the total congestion cost $\int \int m_t g(m_t)$. This shows that the equilibrium minimizes an overall energy (i.e. we work with a potential game), but not necessarily the total cost; this phenomenon is known as the presence of a price of anarchy. 

Another key idea is to introduce convexity to the problem (which has also been one of the main points of \cite{bb}). In what we have so far, the functional $\mathcal{B}$ is not convex due to the presence of the $m|v|^2$ term. The differential constraints are also problematic, as they involve the $mv$ product. However, a change of variables $(m,v) \rightarrow (m,w)$ with $w := mv$ makes the differential constraint linear. Furthermore, the function

$$\R\times\R^d\ni(m,w)\mapsto\begin{cases}\frac{|w|^2}{2m}&\mbox{ if }m>0,\\
									0&\mbox{ if }(m,w)=(0,0),\\
									+\infty&\mbox{ otherwise}\end{cases}$$
is convex (and equal to $\sup\{am+b\cdot w\;:\;a+\frac 12 |b|^2\leq 0\}$).

In this way we reduce our problem to minimizing a convex functional under linear differential constraints, which enables us to use a wide range of convex optimization techniques (see \cite{BenCar} for a numerical treatment), the first of which is the duality approach.

To identify the dual of the $\inf \mathcal{B}(m,w)$ problem we employ the min-max exchange procedure. First, we put the differential constraints in the weak form, i.e. we require that
  $$\int_0^T\int_\Omega(m\partial_tu +\nabla u\cdot m v)+\int_\Omega u_0m_0-\int_\Omega u_Tm_T=0$$
  for every function $u\in C^1([0,T]\times\Omega)$ (here $u_0$ and $u_T$ denote the functions $u(0,\cdot)$ and $u(T,\cdot)$, respectively). Note also that we do not impose any conditions on the values of $u$ on $\partial\Omega$, which is equivalent to completing \eqref{CE} with a no-flux boundary condition $v\cdot n=0$. We may now re-write our problem as
  $$\min_{m,v}\; \mathcal A(m,v)+\sup_u\int_0^T\int_\Omega(m\partial_tu +\nabla u\cdot m v)+\int_\Omega u_0\overline{m_0}-\int_\Omega u_Tm_T,$$
  since the sup in $u$ is $0$ if the constraint is satisfied and $+\infty$ if not. 
  
  Formally interchanging inf and sup and using the convex conjugate function $G^*(p)=\sup_m\; pm-G(m)$ we obtain a dual problem of the form
  $$\sup\left\{-\mathcal A(u,p):= \int_\Omega u_0\overline{m_0}-\int_0^T\int_\Omega G^*(p)\;:\;u\in C^1,\,u_T=\Psi,\,-\partial_tu+\frac12 |\nabla u|^2=p\right\}.$$
  Observe that we introduce an extra variable $p$ tied to $u$ through a PDE constraint even though we could have just used $u$. This is mainly to make the notation more natural, but also yields a certain symmetry with respect to the primal problem. The choice of the sign $- \mathcal{A}$ is arbitrary, but again, it makes the subsequent exposition clearer, as we will be interested in computing the sum $\mathcal A(u,p)+\mathcal B(m,v)$.
  
  We will see in Section \ref{SecPrel} that the set-up of the dual problems $\max-\mathcal A(u,p)$ and $\min\mathcal B(m,v)$ is indeed relevant to the MFG system \eqref{MFGgrho}, as optimizers immediately yield a formal solution to our PDEs. 
    
\section{Preliminary results on duality}\label{SecPrel}
First, we want to be precise on the duality result that we mentioned in Section \ref{SecMod}.

Denote by $\mathcal{D}_0$ the set of $C^1$ functions $u : [0,T] \times \Omega \rightarrow \ccr$ such that $u(T,x) =\Psi(x)$. We also denote by $\mathcal{D}$ the set of pairs $(u,p)$ with $u \in \mathcal{D}_0$, $p\in C^0([0,T] \times \Omega)$ and $p=-\partial_t u +\jd|\nabla u|^2$. Then, we define the functional
$$ \mathcal{A}(u,p) := \int_0^T \int_{\Omega} G^* (p) dx dt - \int_{\Omega} u_0 d\overline{m_0},$$
and we consider
$$ \inf\left\{ \mathcal{A}  (u,p)\in \mathcal{D}\right\}  \ldotp$$

The second problem concerns the set $\mathcal{H}$ defined as the set of pairs $(m,v)$ where $m \in L^1((0,T) \times \Omega; \ccr) $ and $v$ is a measurable vector field defined on $(0,T) \times \Omega$, such that $m(t,x) \geq 0$ a.e. on $(0,T) \times \Omega$, $\int_{\Omega} m(t,x) dx = 1$ for almost all $t \in (0,T)$, $mv\in L^1((0,T) \times \Omega; \ccr^d) $  and such that the pair $(m,v)$ satisfies, in the sense of distributions, the following continuity equation:
\begin{equation}\label{CEmv}
\begin{cases}
\deltat m +\nabla\cdot(mv) = 0, \\
mv\cdot n=0\,\mbox{ on }\partial\Omega,\\
m_0= \overline{m_0} \ldotp
\end{cases}
\end{equation}
In order to give a precise meaning to the boundary conditions above (both in time and space), we precise the meaning of \eqref{CEmv}: for every $\phi\in C^1_c([0,T)\times\overline\Omega)$ (i.e. we do not impose the support of $\phi$ to be far from $t=0$ or from $\partial\Omega$, but only from $t=T$) we require
$$\int_0^T\int_\Omega (m\partial_t\phi+mv\cdot\nabla\phi ) dxdt+\int_\Omega \phi(0,x)\overline{m_0}(x)dx=0.$$

For $(m,v) \in \mathcal{H}$ we define
\begin{equation}\label{eqDefB} 
\mathcal{B}(m,v) := \int_0^T \int_{\Omega}\left( \jd m(t,x)|v(t,x)|^2 +G(m(t,x))\right)dxdt + \int_{\Omega} \Psi(x)dm_T(x).
\end{equation}
Note that the first integral is well defined, with values in $[0,+\infty]$. To give meaning to the last term we observe that whenever 
$$\int_0^T \int_{\Omega}\jd m(t,x)|v(t,x)|^2 dxdt< \infty$$
then the theory of optimal transport (see Chapter 5 in \cite{OTAM}, for instance) allows us to identify $m$ with a continuous curve defined on $[0,T]$ and valued in $\pical(\Omega)$, endowed with the $W_2$ Wasserstein distance. Thus, $m_t$ is a well-defined measure for any $t \in [0,T]$, hence we have the right to refer to this measure when writing $\int_{\Omega}  \Psi(x)dm_T(x)$. 

Note that in this case we may also re-write \eqref{CEmv} as
$$\int_0^T\int_\Omega (m\partial_t\phi+mv\cdot\nabla\phi ) dxdt+\int_\Omega \phi(0,x)dm_0-\int_\Omega \phi(T,x)dm_T=0,$$
where $m_0=\overline{m_0}$ is prescribed, but $m_T$ is not.

Finally, we introduce our second optimization problem as 
$$ \inf_{(m,v) \in \mathcal{H}} \mathcal{B}(m,v) \ldotp$$
%

Let us state here the duality result we wish to use, and formulate the first assumptions we need for it to work:
\begin{description}
\item[(Hsuper) --] the function $G$ is superlinear, i.e. $\lim_{s\to\infty} G(s)/s=+\infty$.
\item[(Hstrict) --] the function $G$ is strictly convex.

\end{description}

\begin{theorem}\label{thmFirstDuality2}
If (Hsuper) holds, then
$$ \inf_{(u,p) \in \mathcal{D}} \mathcal{A}(u,p) = - \inf_{(m,v) \in \mathcal{H}} \mathcal{B}(m,v) \ldotp$$
and the infimum on the right-hand side is attained. The minimizer $(m,v) \in \mathcal{H}$ is unique if (Hstrict) also holds. 
\end{theorem}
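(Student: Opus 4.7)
The plan is to eliminate $p$ in the primal via $p = -\partial_t u + \jd|\nabla u|^2$, write $\inf_{\mathcal D}\mathcal A$ in the standard form $F(u) + K(\Lambda u)$ with linear $\Lambda u := (\partial_t u, \nabla u)$, and recover $-\mathcal B$ as its Fenchel--Rockafellar dual. The key convexity observation is that $\phi(a,b) := G^*(-a + \jd|b|^2)$ is convex in $(a,b)$: the inner map $(a,b)\mapsto -a + \jd|b|^2$ is convex, and $G^*$ is convex and nondecreasing (the latter because $G = +\infty$ on $(-\infty,0)$), so the composition is convex. Under (Hsuper), $G^*$ is finite and locally Lipschitz on all of $\R$, which makes $K(a,b) := \int_0^T\!\int_\Omega \phi(a,b)\,dx\,dt$ a well-defined convex continuous functional on $C^0\times C^0(\R^d)$.

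I would first establish weak duality $\inf\mathcal A + \inf\mathcal B \geq 0$ directly. For $(u,p)\in\mathcal D$ and $(m,v)\in\mathcal H$, testing the weak continuity equation against $u$ (using $u_T = \Psi$, $m_0 = \overline{m_0}$) gives
\begin{equation*}
\int_\Omega \Psi\,dm_T - \int_\Omega u_0\,d\overline{m_0} = \int_0^T\!\int_\Omega (m\partial_t u + mv\cdot\nabla u)\,dx\,dt.
\end{equation*}
Substituting into $\mathcal A(u,p) + \mathcal B(m,v)$ and bounding $mv\cdot\nabla u \geq -\jd m|v|^2 - \jd m|\nabla u|^2$, the kinetic contributions cancel, leaving $\int_0^T\!\int_\Omega(G^*(p) + G(m) - mp)\,dx\,dt \geq 0$ by the Fenchel inequality.

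For strong duality I apply Fenchel--Rockafellar to $\inf_u F(u) + K(\Lambda u)$ on $X := C^1([0,T]\times\Ov)$, with $F(u) := -\int u_0\,d\overline{m_0}$ when $u_T = \Psi$ and $+\infty$ otherwise. The dual variable is $(m,w)\in\mcal([0,T]\times\Ov;\R^{1+d})$. A pointwise Legendre computation yields
\begin{equation*}
\phi^*(m,w) = \begin{cases} G(m) + \frac{|w|^2}{2m} & \text{if } m>0,\\ 0 & \text{if } (m,w) = (0,0),\\ +\infty & \text{otherwise,}\end{cases}
\end{equation*}
so $K^*(m,w)$ reproduces the first two integrals of $\mathcal B$ after setting $w = mv$. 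Integration by parts identifies $\Lambda^*(m,w) = -\partial_t m - \nabla\cdot w$ together with boundary traces at $t=0,T$ and on $\partial\Omega$; finiteness of $F^*(-\Lambda^*(m,w))$ then forces exactly $\partial_t m + \nabla\cdot w = 0$, $w\cdot n = 0$ on $\partial\Omega$, $m_0 = \overline{m_0}$, and leaves $F^*(-\Lambda^*(m,w)) = \int \Psi\,dm_T$. Thus the dual reads $\sup\{-\mathcal B(m,v) : (m,v)\in\mathcal H\}$, and the theorem gives both equality and attainment of the sup, i.e.\ existence of a minimizer of $\mathcal B$.

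The main technical step is the qualification assumption: one needs a point $u^\flat \in X$ at which $F$ is finite and $K$ is continuous at $\Lambda u^\flat$. The affine choice $u^\flat(t,x) := \Psi(x) + C(T - t)$ works, since $u^\flat(T,\cdot) = \Psi$ makes $F(u^\flat)$ finite, $\phi(\Lambda u^\flat) = G^*(C + \jd|\nabla\Psi|^2)$ is finite by (Hsuper), and $K$ is continuous there by local Lipschitz continuity of $G^*$ (a standard mollification handles nonsmooth $\Psi$, and one passes to the limit afterwards). Finally, uniqueness under (Hstrict) follows from strict convexity of $\mathcal B$ in $(m,w)$: both $G$ and $(m,w)\mapsto |w|^2/(2m)$ on $\{m>0\}$ are strictly convex, and the admissible set is affine in $(m,w)$, so the optimum $(m,w)$ is unique and hence so is $v$ on $\{m > 0\}$.
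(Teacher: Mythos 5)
Your route is the one the paper itself relies on: the paper does not reprove the statement but refers to the Fenchel--Rockafellar argument of Lemma 2.1 in \cite{carda}, which is exactly your decomposition $F(u)+K(\Lambda u)$ on $C^1$, with dual variables a priori measures, absolute continuity of $(m,w)$ forced because $\phi$ is finite everywhere under (Hsuper) (so the recession function of $\phi^*$ kills singular parts), the constraint set $\mathcal H$ recovered from finiteness of the $F^*$ term, and attainment on the $(m,v)$ side coming for free from the theorem; your qualification point $u^\flat$ and the continuity of $K$ via local Lipschitzness of $G^*$ are the right ingredients. One bookkeeping remark: your sign conventions are not internally consistent --- with the pairing that makes $F^*(-\Lambda^*(m,w))$ encode $\partial_t m+\nabla\cdot w=0$, $m_0=\overline{m_0}$, the conjugate $K^*$ must be evaluated at $-(m,w)$ (equivalently, with your choice the natural density is $-m$ and $\phi^*$ is finite on $\{m\le 0\}$); this is fixed by relabelling and does not affect the argument.

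The one step that fails as written is the uniqueness argument: the map $(m,w)\mapsto |w|^2/(2m)$ is \emph{not} strictly convex on $\{m>0\}$ --- it is positively $1$-homogeneous, hence affine along rays through the origin --- so you cannot invoke joint strict convexity of the kinetic term. The conclusion still holds by the standard two-step repair: if $(m_1,w_1)$ and $(m_2,w_2)$ are both optimal, the midpoint is admissible (the constraint is affine) and optimal by convexity; strict convexity of $G$ under (Hstrict) then forces $m_1=m_2=:m$ a.e., since otherwise the $G$-term would strictly decrease at the midpoint while the kinetic and final terms do not increase; and for this fixed $m$ the map $w\mapsto\int |w|^2/(2m)$ is strictly convex on $\{m>0\}$, while finiteness forces $w=0$ on $\{m=0\}$, so $w_1=w_2$ and hence $v_1=v_2$ holds $m$-a.e. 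With that repair (and the sign relabelling), your proposal is a correct reconstruction of the proof the paper outsources to \cite{carda}.
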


The above results from the Fenchel-Rockafellar duality theorem. The reader can find its proof in \cite{carda}, Lemma 2.1, and should not be concerned by the growth conditions assumed in the beginning of  \cite{carda}, as they are not used in the duality proof (see also \cite{BenCarSan}). By abuse of language, we will call 'primal' the problem on $(m,v)$ and 'dual' the one on $(u,p)$, simply because we are more interested in $\min \mathcal{B}(m,v)$. Note that in general one does not expect the dual problem $\min \mathcal{A}$ to have a solution, at least not in $\mathcal{D}$. Yet, the problem can be relaxed so that it admits a solution in a more general class of functions (in the set of BV functions). We do not give the details of this reasoning here, however we intend to show that these optimization problems are intimately related to the solutions of the MFG system.

 The key here is the following:

\begin{lemma}\label{lemmaVariationalRegularity}
For any $(u,p) \in \mathcal{D}$ and $(m,v) \in \mathcal{H}$ we have 
$$\mathcal{A}(u,p) + \mathcal{B}(m,v) = \int_0^T \int_{\Omega}\left(G(m)+G^*(p)-mp\right) dx dt+ \frac{1}{2} \int_0^T \int_{\Omega} m |v + \nabla u|^2 dx dt \ldotp$$
\end{lemma}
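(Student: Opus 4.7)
The identity is purely algebraic once we unpack the two functionals and use both the PDE constraint encoded in $\mathcal{D}$ and the weak continuity equation encoded in $\mathcal{H}$, so my plan is to simply add $\mathcal{A}(u,p)+\mathcal{B}(m,v)$ and rearrange until the claimed expression appears. Before starting I would dispose of the trivial case in which $\mathcal{B}(m,v)=+\infty$: if the kinetic term $\int_0^T\!\int_\Omega \tfrac12 m|v|^2$ is infinite, then the right-hand side is also $+\infty$ (both the kinetic part and $G+G^*-mp\geq 0$ are nonnegative), and the identity is $+\infty=+\infty$. So from now on I assume $\int m|v|^2<\infty$, which by the optimal-transport remark just above the lemma identifies $m$ with a continuous curve in $\mathcal{P}(\Omega)$ for the $W_2$ distance, so that $m_T$ is a well-defined measure and the extended weak form of \eqref{CEmv}
$$\int_0^T\!\int_\Omega (m\partial_t\phi+mv\cdot\nabla\phi)\,dxdt+\int_\Omega \phi(0,\cdot)\,d\overline{m_0}-\int_\Omega \phi(T,\cdot)\,dm_T=0$$
holds for every $\phi\in C^1([0,T]\times\overline\Omega)$ (this is a standard density/approximation argument starting from $C^1_c([0,T)\times\overline\Omega)$).

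The main computation is straightforward. Adding the two functionals and grouping the $G(m)+G^*(p)$ integrand together,
$$\mathcal{A}(u,p)+\mathcal{B}(m,v)=\int_0^T\!\int_\Omega (G(m)+G^*(p))\,dxdt+\tfrac12\int_0^T\!\int_\Omega m|v|^2\,dxdt+\int_\Omega \Psi\,dm_T-\int_\Omega u_0\,d\overline{m_0}.$$
Now I would expand $\tfrac12 m|v+\nabla u|^2=\tfrac12 m|v|^2+mv\cdot\nabla u+\tfrac12 m|\nabla u|^2$, so that matching with the target right-hand side reduces the identity to proving
$$\int_\Omega \Psi\,dm_T-\int_\Omega u_0\,d\overline{m_0}=-\int_0^T\!\int_\Omega mp\,dxdt+\int_0^T\!\int_\Omega mv\cdot\nabla u\,dxdt+\tfrac12\int_0^T\!\int_\Omega m|\nabla u|^2\,dxdt.$$
Using the defining relation $p=-\partial_t u+\tfrac12|\nabla u|^2$ valid on $\mathcal{D}$, we have $-mp=m\partial_t u-\tfrac12 m|\nabla u|^2$, which collapses the right-hand side to $\int_0^T\!\int_\Omega (m\partial_t u+mv\cdot\nabla u)\,dxdt$.

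What is left is exactly the weak continuity equation applied to the test function $\phi=u$, combined with the boundary conditions $u(T,\cdot)=\Psi$ and $m_0=\overline{m_0}$ (so that $\int_\Omega \Psi\,dm_T=\int_\Omega u_T\,dm_T$ and $\int_\Omega u_0\,d\overline{m_0}=\int_\Omega u_0\,dm_0$). The main obstacle, which is however minor, is precisely the legitimacy of using $u$ as a test function: since $\mathcal{D}_0$ prescribes $u\in C^1$ but nothing at $t=0$ nor on $\partial\Omega$, one has to invoke the extended weak formulation mentioned above, whose validity rests on the finiteness of the kinetic energy. Once this is granted, the computation closes and gives the claimed identity.
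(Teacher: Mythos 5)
Your proof is correct and follows essentially the same route as the paper's: both reduce the identity, via the relation $p=-\partial_t u+\tfrac12|\nabla u|^2$ and the expansion of $\tfrac12 m|v+\nabla u|^2$, to testing the weak continuity equation with the $C^1$ function $u$ and using $u(T,\cdot)=\Psi$, $m_0=\overline{m_0}$. Your added care about the case of infinite kinetic energy and about extending the weak formulation to test functions not vanishing near $t=T$ is a welcome refinement of the same argument, not a different approach.
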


\begin{proof}
We start with 
\begin{equation}\label{A+B}
\mathcal{A}(u,p) + \mathcal{B}(m,v) 
=\int_0^T \int_{\Omega}\left( \frac{1}{2} m|v|^2 + G(m) + G^*(p)\right) dxdt + \int_{\Omega} \Psi\, dm_T -  \int_{\Omega}  u_0\, d\overline{m_0}.
\end{equation}
Then we use
\begin{eqnarray*}
 \int_{\Omega} \Psi\, dm_T -  \int_{\Omega}  u(0)\, d\overline{m_0}&=& \int_0^T \int_{\Omega} \deltat (mu) dxdt= \int_0^T \int_{\Omega} \left(-u\nabla\cdot(mv) + m \deltat u \right)dx dt\\
 &=& \int_0^T \int_{\Omega} \left(\nabla u\cdot(mv) + m \left(\jd |\nabla u|^2-p\right)\right)dx dt.
 \end{eqnarray*}
Inserting this into \eqref{A+B} yields the desired result
$$\mathcal{A}(u,p) + \mathcal{B}(m,v) =\int_0^T \int_{\Omega}\left( \jd m|v + \nabla u|^2+G(m)+G^*(p)-mp\right)dx dt.$$
Observe that 
we used the fact that $u$ is $C^1$, when integrating by parts in  $\int_0^T \int_{\Omega} u\nabla\cdot(mv) $ (indeed, we just used the fact that $(m,v)$ satisfies \eqref{CE} in a weak sense). \end{proof}

Observe now that the above lemma coupled with Theorem \ref{thmFirstDuality2} immediately gives a solution to the MFG system as long as one assumes existence of a pair $(u,p)$ minimising the functional $\mathcal{A}$. Indeed, we then have $\mathcal{A}(u,p) + \mathcal{B}(m,v) = 0$, which yields
\begin{equation}
\begin{cases}
G(m)+G^*(p)-mp=0\;\impl\; p=g(m)\;\impl\;  - \deltat u + \jd |\nabla u|^2 =g(m)\quad \text{a.e.} \\
v = -\nabla u \quad m \text{-a.e.}
\end{cases}
\end{equation}
and the boundary conditions $ m_0=\overline{m_0}$ and $u(T,\cdot) = \Psi$ are automatically satisfied by all admissible $m$ and $u$. This shows that the functionals $\mathcal{A}$ and $\mathcal{B}$ are of importance from the MFG point of view. Note however that the above is only valid once we know that there exists a $C^1$ minimiser of $\mathcal{A}$, as we need to use \ref{lemmaVariationalRegularity}. If one extends the problem $\min \mathcal{A}$ to a larger class of admissible functions, then the notion of solution that we find should be weakened. We refer the reader to \cite{BenCarSan} for a survey about these notions, or to \cite{carda,CarGra} for the original papers. In particular we underline that the solutions to the MFG system would involve a BV function $u$, and the HJ equation would become an inequality in the region $\{m=0\}$.

In the following, we will use Lemma \ref{lemmaVariationalRegularity} to provide regularity properties on the minimizer $(m,v)$, in particular on $m$, of the primal problem.

Let us now precise the language that we will use to prove regularity. Consider the convex function $G$, and suppose that there exist two functions $J,J_*:\R\to\R$ and a positive constant $c>0$ such that for all $m,p\in\R$ we have
\begin{equation}\label{QP}
G(m)+G^*(p)\geq mp+c|J(m)-J_*(p)|^2.
\end{equation}

\begin{remark}
Of course, this is always satisfied by taking $J,J_*=0$, but we will be interested in less trivial cases, as we will later provide regularity results in terms of $J(m)$. Note for instance that we have the following interesting examples.
\begin{itemize}
\item If $G(m)=\jd |m|^2$, then we have $G^*=G$ and
$$\jd |m|^2+\jd |p|^2=mp + \jd |m-p|^2$$
hence we can take $J=J_*=id$.
\item If $G(m)=\frac 1q |m|^q$ for $q>1$, then $G^*(p)=\frac 1{q'} |p|^{q'}$, with $q'=q/(q-1)$; it is possible to prove
$$\frac 1q |m|^q+\frac 1{q'} |p|^{q'}\geq pq +\frac{1}{2\max\{q,q'\}} \left|m^{q/2}-p^{q'/2}\right|^2,$$
i.e. we can use $J(m)=m^{q/2}$ and $J_*(p)=p^{q'/2}$. 
\item If $G(m)=m\log m-m$ (defined as $+\infty$ for $m<0$), then $G^*(p)=e^p$ and we can prove the existence of a constant $c_0>0$ such that
$$m\log m-m+e^p\geq pm+c_0|\sqrt{m}-e^{p/2}|^2,$$
i.e.  we can use $J(m)=\sqrt{m}$ and $J_*(p)=e^{p/2}$. 
\item As a general fact, whenever we have $G''\geq c>0$, then we can use $J(m)=m$ and $J_*(p)=(G^*)'(p)=g^{-1}(p)$. 
\end{itemize}
Most of these inequalities are proved for instance in \cite{LNRegDual}. They are presented here in the case of scalar variables $m,p$ but can also be generalized to vector variables.
\end{remark}\smallskip

Before we proceed with the proof, let us briefly discuss the intuition behind what we will do. We wish to show that if $m$ is a minimiser of $\mathcal{B}$ then $J(m) \in H^1_{\text{loc}}((0,T] \times \Omega)$. The idea is that, should $\mathcal{A}$ admit a $C^1$ minimiser $\tilde{u}$ (more precisely, a pair $( u, p)$), then by the Duality Theorem \ref{thmFirstDuality2} we have $\mathcal{A}(u, p) + \mathcal{B}(m,v) = 0$. From our assumption and Lemma  \ref{lemmaVariationalRegularity}, we get $J(m) = J_*(p)$. If we managed to show that $\tilde{m}(t,x) := m(t + \eta, x + \delta)$ with corresponding field $\tilde{v}$ is close to minimising $\mathcal{B}$, in the sense
$$\mathcal{B}(\tilde{m}, \tilde{v}) \leq \mathcal{B}(m,v) + C(|\eta|^2 + |\delta|^2)$$
for small $\eta \in \ccr$, $\delta \in \ccrd$, then we would have
$$C(|\eta|^2 + |\delta|^2) \geq \mathcal{B}(\tilde{m}, \tilde{v}) + \mathcal{A}(u, p) \geq c || J(\tilde{m}) - J_*( p)||_{L^2}^2 \ldotp$$
However we already know that $J_*( p)=J(m)$, and so we get
$$C(|\eta|^2 + |\delta|^2) \geq c|| J(\tilde{m}) - J(m)||_{L^2}^2,$$
which would mean that $J(m)$ is $H^1$ as we have estimated the squared $L^2$ norm of the difference between $J(m)$ and its translation by the squared length of the translation vector. Of course this is just to give some intuition regarding what we do next. There are plenty of technical issues that need to be taken care of, for instance $\tilde{m}$ is not even well-defined (as we need the value of $m$ outside $[0,T]\times\Omega$), does not satisfy the initial condition $\tilde{m}_0 = \overline{m_0}$ and we do not know if $\mathcal{A}$ admits a minimiser. The aim of Sections \ref{SecSpace} and \ref{SecTime} is to deal with all these difficulties and arrive at the desired result.

We refer to \cite{BenCarSan} and \cite{Pierre} for some possible applications of the improved summability results that derive from the $H^1$ regularity that we can prove with these techniques, and in particular for a rigorous way to describe the Nash equilibrium at the trajectorial level (as it was first provided in \cite{af}).

\section{Space Regularity}\label{SecSpace}

In this section we will show how to obtain $\nabla (J(m))\in L^2_{loc}((0,T]\times\Omega)$, i.e. spatial regularity for $J(m)$. For simplicity, we will suppose that $\Omega=\tori^d$ is the flat torus, so that we do not have any boundary issue. Hence, when we say that the regularity result that we get is local, we mean local-in-time, as we will not prove it close to $t=0$. On the other hand, this is not surprising, as we did not assume any regularity on the initial datum $\overline{m_0}$. 

We do not develop here the regularity inside different domains $\Omega$, but we stress that this could be obtained (see \cite{LNRegDual}, for instance), far from the boundary $\partial\Omega$.

In order to prove regularity with respect to the $x$ variable and apply the ideas that we sketched at the end of the previous section, we would like to consider\\ 
$m^{\delta}(t,x) := m(t,x + \delta)$. Yet, such $m^{\delta}$ would not satisfy the initial condition of our continuity equation. Therefore we will use a cut-off function.

Fix any $\delta \in \ccrd$, a time instant $t_1 > 0$ and a smooth cut-off function $\zeta : [0,T] \rightarrow [0,1]$ with $\zeta \equiv 0$ on some neighbourhood of $0$ and $\zeta \equiv 1$ on $[t_1, T]$. Define 
\begin{equation}
\begin{cases}
m^{\delta} (t,x) := m(t, x + \zeta(t) \delta), \\
v^{\delta}(t,x) := v(t, x + \zeta(t) \delta) - \zeta'(t) \delta \ldotp
\end{cases}
\end{equation}
It is easy to check that the pair $(m^{\delta}, v^{\delta})$ satisfies the continuity equation together with the initial condition $m^{\delta}_0= \overline{m_0}$.
Therefore it is an admissible competitor in $\mathcal{B}$ with any choice of $\delta$. We may then consider the function
$$ M: \ccrd \rightarrow \ccr, \quad M(\delta) := \mathcal{B}(m^{\delta}, m^{\delta} v^{\delta}) \ldotp$$
The key point here is to show that $M$ is smooth (actually, we need $M\in C^{1,1}$). 

\begin{lemma}\label{MC11}
Suppose that $\Psi\in C^{1,1}$ and that $\Omega=\tori^d$. Then the function $\delta\mapsto M(\delta)$ defined above is $C^{1,1}$.
\end{lemma}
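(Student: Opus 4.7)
The plan is to exploit translation invariance on $\tori^d$ to write $M(\delta)$ as the sum of three terms with explicit $\delta$-dependence, and then check each one.

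First I would change variables $y = x + \zeta(t)\delta$ (a measure-preserving bijection of $\tori^d$, since it is a translation) in the two time-space integrals. This gives
\[
\int_\Omega G(m^\delta(t,x))\,dx = \int_\Omega G(m(t,y))\,dy, \qquad \int_\Omega m^\delta(t,x)|v^\delta(t,x)|^2 dx = \int_\Omega m(t,y)\,|v(t,y) - \zeta'(t)\delta|^2 dy.
\]
The congestion part of $M$ is therefore literally independent of $\delta$. Expanding the square in the kinetic part and using $\int_\Omega m(t,\cdot)\,dy = 1$ yields
\[
\tfrac{1}{2}\int_0^T\!\!\int_\Omega m\,|v^\delta|^2\,dx\,dt = \tfrac{1}{2}\int_0^T\!\!\int_\Omega m|v|^2 dy\,dt \;-\; \delta\cdot\!\int_0^T\!\zeta'(t)\!\int_\Omega mv\,dy\,dt \;+\; \tfrac{|\delta|^2}{2}\!\int_0^T\!|\zeta'(t)|^2\,dt,
\]
which is a quadratic polynomial in $\delta$, hence $C^\infty$. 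Note that the finiteness of the linear and quadratic coefficients is automatic from $\mathcal{B}(m,v)<\infty$ (Cauchy--Schwarz bounds $\int mv$ by $(\int m|v|^2)^{1/2}$).

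The only remaining piece is the terminal cost. Since $\zeta(T)=1$, one has $m^\delta_T(x)=m(T,x+\delta)$, and another torus translation gives
\[
\int_\Omega \Psi(x)\,dm^\delta_T(x) = \int_\Omega \Psi(y-\delta)\,dm_T(y).
\]
Here $m_T$ is a well-defined probability measure because $\mathcal{B}(m,v)<\infty$ makes $m$ a continuous curve in $(\pical(\Omega),W_2)$, as recalled in the comment following \eqref{eqDefB}. Since $\Psi\in C^{1,1}$, the integrand is $C^{1,1}$ in $\delta$ with a bound uniform in $y$, so by differentiation under the integral this term is $C^{1,1}(\ccrd)$ with gradient $-\int_\Omega \nabla\Psi(y-\delta)\,dm_T(y)$, itself Lipschitz with constant $[\nabla\Psi]_{\mathrm{Lip}}$.

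Summing the three contributions yields $M\in C^{1,1}(\ccrd)$, as desired. There is really no substantial obstacle here: the torus assumption trivialises the change-of-variables step (no Jacobian, no boundary cutoffs), and the only regularity input that is actually used is $\Psi\in C^{1,1}$, which is assumed. The point worth flagging is simply that the hypothesis $\Omega=\tori^d$ is used twice — first to move the shift from $m$ and $v$ onto the space variable in the integrals against $G$ and the kinetic density, and second to move it from $m_T$ onto $\Psi$ in the terminal term — so without the torus one would need an additional argument (cut-off in space) to handle both.
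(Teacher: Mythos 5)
Your proof is correct and follows essentially the same route as the paper: translation change of variables on the torus makes the congestion term $\delta$-independent, the kinetic term becomes an explicit quadratic polynomial in $\delta$ with coefficients finite because $m,\,mv\in L^1$, and the terminal term $\int\Psi(y-\delta)\,dm_T(y)$ inherits the $C^{1,1}$ regularity of $\Psi$. The only (harmless) difference is that you spell out the differentiation under the integral and the Cauchy--Schwarz bound that the paper leaves implicit.
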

\begin{proof}
We have
$$\mathcal{B}(m^{\delta}, v^{\delta}) = \int_0^T\!\! \int_{\tori^d} \frac{1}{2} m^{\delta} |v^{\delta}|^2dx dt + \int_0^T\!\! \int_{\tori^d}G(m^{\delta}) dx dt+ \int_{\tori^d} \Psi(x) dm^{\delta}_T(x) \ldotp$$
Observe that the second integral does not depend on ${\delta}$ since it can be transformed, for each $t$, into the integral of $G(m)$, just by a translation change of variable. As for the last term, it can be written as
$$\int_{\tori^d} \Psi(x) dm^{\delta}_T(x) =\int_{\tori^d} \Psi(x-\delta) dm_T(x) $$
and it has at least the same regularity of $\Psi$. As we supposed $\Psi \in C^{1,1}$ (we will see at the end of Section \ref{SecTime} that this assumption can sometimes be weakened), this term is $ C^{1,1}$.
Now, we consider the first term
\begin{multline*}
 \int_0^T \int_{\tori^d} \frac{1}{2} m^{\delta}(t,x) |v^{\delta}(t,x)|^2 dx dt \\= \frac{1}{2} \int_0^T \int_{\tori^d} m(t, x +\zeta(t)\delta) ( |v(t, x+\zeta(t) \delta)|^2 - 2\zeta'(t) \delta \cdot v(t, x + \zeta(t)\delta)  + |\zeta'(t)|^2 |\delta|^2).
\end{multline*}
For a fixed $t$ we consider the change of variables in $x$ given by $x' = x+\zeta(t) \delta$. It is just a translation, and we work on a torus, thus we get
$$ \int_0^T \int_{\tori^d} \frac{1}{2} m^{\delta} |v^{\delta}|^2 dx dt =\frac{1}{2} \int_0^T \int_{\tori^d} m(t, x) ( |v(t, x)|^2 - 2\zeta'(t) \delta \cdot v(t, x)  + |\zeta'(t)|^2 |\delta|^2).$$
Since both $mv$ and $m$ are integrable, the function
$$ \delta \rightarrow \frac{1}{2} \int_0^T \int_{\tori^d} m(t,x) (- 2\zeta'(t) \delta \cdot v(t, x) + |\zeta'(t)|^2 |\delta|^2) dxdt$$
is smooth, and this proves that $M$ is $C^{1,1}$.
\end{proof}
We can now apply the previous lemma to get the estimate we need.
\begin{proposition}\label{propEstimateX} 
There exists a constant $C$, independent of $\delta$, such that for $|\delta| \leq 1$, we have
$$|M(\delta) - M(0)| = |\mathcal{B}(m^{\delta},v^\delta) - \mathcal{B}(m,v)| \leq C|\delta|^2 \ldotp$$
\end{proposition}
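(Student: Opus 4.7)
The plan is essentially a one-liner given Lemma \ref{MC11}: $\delta=0$ is a global minimizer of $M$, and a $C^{1,1}$ function has quadratic growth around any of its minima. So the proposition reduces to checking admissibility of the competitor $(m^\delta,v^\delta)$ and then invoking a standard Taylor-with-Lipschitz-remainder.

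First I would verify that $(m^\delta, v^\delta) \in \mathcal{H}$ for every $\delta \in \R^d$. Non-negativity of $m^\delta$ and the normalization $\int_{\tori^d} m^\delta(t,\cdot)\,dx = 1$ are immediate from translation invariance on the torus; integrability of $m^\delta v^\delta$ is preserved because $\zeta'$ is bounded and $mv \in L^1$; the continuity equation with initial datum $\overline{m_0}$ is exactly what the cut-off $\zeta$ was designed to enforce (it vanishes near $t=0$, so $m^\delta_0 = \overline{m_0}$, and the velocity has been corrected by $-\zeta'(t)\delta$ to compensate for the time-dependent translation). Hence $(m^\delta, v^\delta)\in\mathcal H$, and since $(m,v)$ minimizes $\mathcal{B}$ on $\mathcal{H}$ we get
$$ M(\delta) = \mathcal{B}(m^\delta, v^\delta) \geq \mathcal{B}(m,v) = M(0) \qquad \text{for every } \delta \in \R^d. $$
In particular $\nabla M(0) = 0$.

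Next, Lemma \ref{MC11} provides $M \in C^{1,1}(\R^d)$, say with Lipschitz constant $L$ for $\nabla M$. The identity
$$ M(\delta) - M(0) = \int_0^1 \bigl(\nabla M(s\delta) - \nabla M(0)\bigr) \cdot \delta \, ds, $$
combined with $\nabla M(0) = 0$, immediately yields $0 \leq M(\delta) - M(0) \leq \tfrac{L}{2}|\delta|^2$. Setting $C := L/2$ gives the claim; in fact the restriction $|\delta|\leq 1$ is not truly needed here, but it fits the local use that will be made of the estimate in what follows.

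I do not anticipate any genuine obstacle in this step: the real work is already absorbed in Lemma \ref{MC11}. The only caveat worth flagging is that $L$ depends on $\|\Psi\|_{C^{1,1}}$ and on $\|\zeta'\|_\infty$, so the resulting constant $C$ degenerates as $\zeta$ is sharpened to $\mathbf 1_{[t_1,T]}$ with $t_1\to 0$. This dependence is precisely what restricts the ensuing $H^1$ regularity of $J(m)$ to be local-in-time, away from the initial datum — the very obstruction that Section \ref{SecTime} is designed to overcome near $t=T$.
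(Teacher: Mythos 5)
Your proposal is correct and follows essentially the same route as the paper: invoke Lemma \ref{MC11} for $M\in C^{1,1}$, use optimality of $(m,v)$ (after checking admissibility of the translated competitor, which the paper also notes just before the proposition) to get $\nabla M(0)=0$, and conclude by a Taylor expansion with Lipschitz-gradient remainder. The only cosmetic difference is that you use the integral form of the remainder while the paper uses a mean-value formulation; your remark on how the constant depends on $\zeta$ and hence degenerates as $t_1\to 0$ is accurate and consistent with the paper's local-in-time statement.
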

\begin{proof}
We just need to use Lemma \ref{MC11} and the optimality of $m$. This means that $M$ achieves its minimum at $\delta = 0$, therefore its first derivative must vanish at $0$ and we may conclude by a Taylor expansion: $M(\delta)=M(0)+\delta M'(t\delta)$ for $|t|<1$, where $|M'(t\delta)|=|M'(t\delta)-M'(0)|\leq Ct\delta$.
\end{proof}
With this result in mind, we can easily prove the following
\begin{theorem}\label{space-thm}
If $(m,v)$ is a solution to the primal problem $\min\mathcal B$, if $\Psi\in C^{1,1}$, if $\Omega=\tori^d$ and if $J$ is defined through \eqref{QP}, then $J(m)$ satisfies, for every $t_1>0$, 
$$||J(m(\cdot+\delta))-J(m)||_{L^2([t_1,T]\times\tori^d)}\leq C|\delta|$$
(where the constant $C$ depends on $t_1$ and on the data $\Psi$ and $G$), and hence
is of class $L^2_{loc}((0,T];H^1( \tori^d))$.
\end{theorem}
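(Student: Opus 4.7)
The plan is to combine the translation estimate of Proposition \ref{propEstimateX} with the duality identity of Lemma \ref{lemmaVariationalRegularity}, exactly along the lines of the heuristic given at the end of Section \ref{SecPrel}. Since the dual problem $\min \mathcal{A}$ is not guaranteed to admit a minimizer in $\mathcal{D}$, I would work throughout with a minimizing sequence $(u_n, p_n)\in \mathcal{D}$ with $\mathcal{A}(u_n,p_n)\to \inf_\mathcal{D} \mathcal{A}$, which by Theorem \ref{thmFirstDuality2} equals $-\mathcal{B}(m,v)$.

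First I would apply Lemma \ref{lemmaVariationalRegularity} to the pair $(m,v)$ and this dual sequence. Both terms on the right-hand side are nonnegative (the first by the Legendre--Fenchel inequality, the second obviously), and the left-hand side tends to $0$. Using the pointwise bound \eqref{QP} on the first integrand, this forces
$$\int_0^T\!\!\int_{\tori^d} |J(m)-J_*(p_n)|^2\, dx\, dt \longrightarrow 0,$$
i.e. $J_*(p_n) \to J(m)$ in $L^2((0,T)\times\tori^d)$.

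Next, I would apply Lemma \ref{lemmaVariationalRegularity} to the competitor $(m^\delta, v^\delta)$ paired with the same dual sequence. Dropping the (nonnegative) kinetic term on the right and using \eqref{QP} gives
$$\mathcal{A}(u_n,p_n)+\mathcal{B}(m^\delta,v^\delta)\;\geq\; c\int_0^T\!\!\int_{\tori^d}|J(m^\delta)-J_*(p_n)|^2\, dx\, dt.$$
On the other hand, Proposition \ref{propEstimateX} yields $\mathcal{B}(m^\delta,v^\delta)\leq \mathcal{B}(m,v)+C|\delta|^2$ whenever $|\delta|\leq 1$, and by the choice of the minimizing sequence $\mathcal{A}(u_n,p_n)+\mathcal{B}(m,v)\to 0$. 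Passing to the $\limsup$ in $n$, this gives
$$\limsup_{n\to\infty}\|J(m^\delta)-J_*(p_n)\|_{L^2}^2 \leq \frac{C}{c}|\delta|^2.$$
Combining with the convergence $J_*(p_n)\to J(m)$ in $L^2$ via the triangle inequality produces the uniform-in-$\delta$ bound $\|J(m^\delta)-J(m)\|_{L^2((0,T)\times\tori^d)}\leq C'|\delta|$.

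Finally, since $\zeta\equiv 1$ on $[t_1,T]$, on that interval $m^\delta(t,x)=m(t,x+\delta)$, so restricting to $[t_1,T]\times\tori^d$ gives the stated inequality. The conclusion $J(m)\in L^2_{\mathrm{loc}}((0,T];H^1(\tori^d))$ then follows from the classical characterization of $H^1$ by translation differences in the $x$-direction. The main obstacle I anticipate is the formal step of turning the intuition into a rigorous chain of inequalities when the dual has no minimizer in $\mathcal{D}$: everything must be phrased in terms of the minimizing sequence $(u_n,p_n)$, relying only on the duality identity (not on an actual minimizer $(u,p)$), and on the fact that no compactness on $p_n$ beyond what \eqref{QP} already provides is needed to deduce $L^2$ convergence of $J_*(p_n)$ to $J(m)$.
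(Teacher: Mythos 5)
Your proposal is correct and follows essentially the same route as the paper's own proof: a minimizing sequence $(u_n,p_n)$ for the dual, the identity of Lemma \ref{lemmaVariationalRegularity} combined with \eqref{QP} applied to both $(m,v)$ and $(m^\delta,v^\delta)$, Proposition \ref{propEstimateX} for the $C|\delta|^2$ bound, and a triangle inequality before letting $n\to\infty$ and restricting to $[t_1,T]\times\tori^d$. The only difference is cosmetic (you first extract the convergence $J_*(p_n)\to J(m)$ in $L^2$ and apply the triangle inequality at the end, whereas the paper applies it at the start), so the arguments coincide in substance.
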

\begin{proof}
Let us take a minimizing sequence $(u_n,p_n)$ for the dual problem, i.e. $u_n\in C^1$, $p_n= -\deltat u_n + \frac{1}{2} |\nabla u_n|^2 $ and
$$\mathcal{A}(u_n,p_n) \leq \inf_{(u,p) \in \mathcal{D}} \mathcal{A}(u,p) + \frac{1}{n} \ldotp$$

We use $\tilde{m} = m^{\delta}$ and $\tilde v=v^\delta$ as in the previous discussion. Using first the triangle inequality and then Lemma \ref{lemmaVariationalRegularity} we have (where the $L^2$ norme denotes the norm in $L^2((0,T) \times \tori^d)$)
\begin{eqnarray*}
 c||J(m^{\delta}) - J(m)||_{L^2}^2 &\leq& 2( ||J(m^{\delta}) -J_*( p_n)||_{L^2}^2 + ||J(m) -J_*( p_n)||_{L^2}^2) \\
& \leq& 2 (\mathcal{A}(u_n,p_n) + \mathcal{B}(m^{\delta},  v^{\delta}) + \mathcal{A}(u_n,p_n) + \mathcal{B}(m,v)), 
\end{eqnarray*}
hence
$$||J(m^{\delta}) - J(m)||_{L^2}^2 \leq C(\mathcal{A}(u_n,p_n) + \mathcal{B}(m,v)) + C|\delta|^2  \leq \frac{C}{n} + C|\delta|^2   \ldotp
$$
Letting $n$ go to infinity and restricting the $L^2$ norm to $[t_1,T]\times \tori^d$, we get the claim. 
\end{proof}

\section{Time regularity}\label{SecTime}

In this section we would like to use the same idea of translations to get regularity in time. This can be done by defining $ m^{\varepsilon}_t:= m_{t-\varepsilon \zeta(t)}$ exactly as in the previous section, but we need to pay attention to the boundary $t=T$. Local regularity in time would be easy, but in order to prove a regularity result which arrives up to $t=T$, we need a more careful analysis.

First let us fix notation. We use some theory of Wasserstein spaces here, especially the notion of a metric derivative with respect to the Wasserstein distance. The reader is referred to \cite{AmbGigSav} or to Chapter 5 in \cite{OTAM}.

To make a very short summary of this theory, let us start from the classical Monge-Kantorovich problem.
Given two probability measures $\mu,\nu\in \pical(\Omega)$ we consider the set of transport plans
$$\Pi(\mu,\nu)=\{\gamma\in\pical(\R^d\times \R^d):\,(\pi_x)_{\#}\gamma=\mu,\,(\pi_y)_{\#}\gamma=\nu\},$$
i.e. the probability measures on the product space having $\mu$ and $\nu$ as marginal measures.

We consider the Kantorovich optimal transport problem for the cost $c(x,y)=|x-y|^2$ from $\mu$ to $\nu$, i.e.
$$ \min\left\{\int |x-y|^2\,d\gamma\;:\:\gamma\in\Pi(\mu,\nu)\right\}.$$
 
The value of this minimization problem with the quadratic cost may also be used to define the Wasserstein distance:
$$W_2(\mu,\nu):=\sqrt{ \min\left\{\int |x-y|^2\,d\gamma\;:\:\gamma\in\Pi(\mu,\nu)\right\}}.$$
On a compact $\Omega$, this quantity may be proven to be a distance over $\pical(\Omega)$, and it metrizes the weak-* convergence of probability measures. The space $\pical(\Omega)$ endowed with the distance $W_2$ is called Wasserstein space of order $2$ and is denoted in this paper by $\mathbb W_2(\Omega)$. 

We recall the definition of metric derivative in metric spaces, applied to the case of $\mathbb W_2(\Omega)$: for a curve $t\mapsto  m _t\in 
\mathbb W_2(\Omega)$, we define
$$|\dot{m}|(t):=\lim_{s\to 0}\frac{W_2( m _{t+s}, m _t)}{|s|},$$
whenever this limit exists. If the curve $t\mapsto  m _t$ is absolutely continuous for the $W_2$ distance, then this limit exists for a.e. $t$.
The important fact, coming from the Benamou-Brenier formula and explained for the first time in \cite{AmbGigSav}, is that the absolutely continuous curves in $\mathbb W_2(\Omega)$ are exactly those curves which admit the existence of a velocity field $v_t$ solving the continuity equation together with $ m $ and that the metric derivative $| \dot{m}|(t)$ can be computed as the minimal norm $||v_t||_{L^2( m _t)}$ among those vector fields. More precisely
\begin{proposition}
Suppose $( m ,v)$ satisfies the continuity equation $\partial_t m +\nabla\cdot( m  v)=0$ and $\int_0^T\int_\Omega  m |v|^2<\infty$. Then there exists a representative of $ m $ such that $t\mapsto  m _t\in \mathbb W_2(\Omega)$ is absolutely continuous and $| \dot{m}|(t)\leq ||v_t||_{L^2( m _t)}$ a.e. Conversely, if $t\mapsto m_t$ is an absolutely continuous map for the distance $W_2$, then for a.e. $t$ there exists a vector field $v_t\in L^2(m_t)$ such that $\partial_t m +\nabla\cdot( m  v)=0$ and $||v_t||_{L^2( m _t)}\leq | \dot{m}|(t)$.
\end{proposition}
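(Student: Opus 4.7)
The plan is to treat the two implications separately, following the classical Ambrosio-Gigli-Savar\'e approach.

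For the direct implication (continuity equation plus finite energy yields an absolutely continuous curve), I would first \emph{regularize} $(m, mv)$ in space by convolution with a standard mollifier $\rho^{\varepsilon}$: set $m^{\varepsilon}:=m\ast\rho^{\varepsilon}$, $w^{\varepsilon}:=(mv)\ast\rho^{\varepsilon}$, and $v^{\varepsilon}:=w^{\varepsilon}/m^{\varepsilon}$. Joint convexity of $(m,w)\mapsto |w|^{2}/m$ and Jensen's inequality give the bound $\int |v^{\varepsilon}_{t}|^{2}dm^{\varepsilon}_{t}\leq \bigl(\int |v_{t}|^{2}dm_{t}\bigr)\ast\rho^{\varepsilon}(t)$, and $m^{\varepsilon}$ is smooth and strictly positive on $\tori^d$, so the flow $X^{\varepsilon}_{t}$ of $v^{\varepsilon}$ is classically defined. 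Using $(X^{\varepsilon}_{s},X^{\varepsilon}_{t})_{\#}m^{\varepsilon}_{0}$ as a transport plan between $m^{\varepsilon}_{s}$ and $m^{\varepsilon}_{t}$, Cauchy--Schwarz yields
$$W_{2}^{2}(m^{\varepsilon}_{s},m^{\varepsilon}_{t})\leq\int |X^{\varepsilon}_{t}-X^{\varepsilon}_{s}|^{2}dm^{\varepsilon}_{0}\leq (t-s)\int_{s}^{t}\|v^{\varepsilon}_{r}\|^{2}_{L^{2}(m^{\varepsilon}_{r})}dr.$$
Passing to the limit $\varepsilon\to 0$ by lower semicontinuity of $W_{2}$ under narrow convergence gives $W_{2}^{2}(m_{s},m_{t})\leq (t-s)\int_{s}^{t}\|v_{r}\|^{2}_{L^{2}(m_{r})}dr$, hence absolute continuity of $t\mapsto m_{t}$ and $|\dot m|(t)\leq\|v_{t}\|_{L^{2}(m_{t})}$ at a.e. $t$.

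For the converse, I would fix $\phi\in C^{1}_{c}(\Omega)$ and study $h_{\phi}(t):=\int\phi\,dm_{t}$. Picking an optimal plan $\gamma_{s,t}\in\Pi(m_{s},m_{t})$ and applying a first-order Taylor bound for $\phi$ together with Cauchy--Schwarz, one gets, for $s<t$ close enough,
$$\frac{|h_{\phi}(t)-h_{\phi}(s)|}{t-s}\leq\frac{W_{2}(m_{s},m_{t})}{t-s}\cdot\|\nabla\phi\|_{L^{2}(m_{t})}+o(1),$$
so that at Lebesgue points $|h'_{\phi}(t)|\leq|\dot m|(t)\|\nabla\phi\|_{L^{2}(m_{t})}$. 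After selecting a countable dense family of test functions $\phi$, this defines, for a.e. $t$, a bounded linear form on the subspace $\{\nabla\phi:\phi\in C^{1}_{c}(\Omega)\}\subset L^{2}(m_{t};\R^{d})$ of norm $\leq|\dot m|(t)$. Hahn--Banach extension followed by Riesz representation produces $v_{t}\in L^{2}(m_{t};\R^{d})$ with $\|v_{t}\|_{L^{2}(m_{t})}\leq|\dot m|(t)$ and $h'_{\phi}(t)=\int\nabla\phi\cdot v_{t}\,dm_{t}$ for every such $\phi$, which is exactly the continuity equation in distributional form.

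The main obstacle will be, in the converse direction, the joint $(t,x)$-measurability of $v_{t}(x)$, needed to make sense of $\|v\|_{L^{2}(dm_{t}\,dt)}$ and to test the continuity equation against time-dependent functions $\phi(t,x)$. The clean way to handle this is to select $v_{t}$ canonically as the element of minimal $L^{2}(m_{t})$-norm in the affine closed set $\{v\in L^2(m_t;\R^d):\int\nabla\phi\cdot v\,dm_{t}=h'_{\phi}(t)\text{ for all admissible }\phi\}$; this unique minimizer lies in the closure of $\{\nabla\phi:\phi\in C^{1}_{c}\}$ in $L^{2}(m_{t})$ (the so-called tangent space to $\mathbb W_{2}$ at $m_{t}$), and its uniqueness, combined with the narrow continuity of $t\mapsto m_{t}$, yields Borel dependence in $t$ via a standard measurable-selection argument. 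A secondary technical point is the choice of topology for the limit $\varepsilon\to0$ in the direct part (narrow for $m^{\varepsilon}$, joint narrow for $(m^{\varepsilon},m^{\varepsilon}v^{\varepsilon})$), which is routine but worth stating explicitly.
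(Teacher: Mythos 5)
The paper does not prove this proposition at all: it is recalled as a known result (Benamou--Brenier/metric characterization of absolutely continuous curves in $\mathbb W_2$) with a citation to \cite{AmbGigSav} and to Chapter 5 of \cite{OTAM}, and your plan is precisely the standard argument from those references (mollification, flow of the regularized field and lower semicontinuity of $W_2$ for one direction; dual estimates on $t\mapsto\int\phi\,dm_t$, minimal-norm/Riesz selection and measurable selection for the converse), so it is correct and matches the intended source. Only cosmetic remark: if you mollify in space only, the Jensen-type bound reads $\int|v^{\varepsilon}_t|^2dm^{\varepsilon}_t\leq\int|v_t|^2dm_t$ for a.e.\ fixed $t$, with no convolution in the time variable needed.
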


This proposition allows us to get rid of the variable $v$ in the primal problem and recast it in the following form

$$\min\left\{\mathrm{B}(m) := \int_0^T \left(\jd |\dot{m}|(t)^2 + \mathcal G(m_t)\right) dt + \int \Psi dm_T\;:\; m_0=\overline{m_0}\right\},$$
where the functional $\mathcal G$ is defined through 
$$\mathcal G(m)=\begin{cases}\int G(m(x))dx&\mbox{ if }m\ll\lcal^d,\\
						+\infty &\mbox{ otherwise}.\end{cases}$$

Let us now fix $m$ to be a minimiser of $\mathrm{B}$. Take any $t_1 > 0$ and a smooth non-negative function $\zeta$ defined on $[0,T]$ and equal to $0$ on some neighbourhood of $0$.  For small $\varepsilon > 0$, the map $t \rightarrow t - \varepsilon \zeta(t)$ is a strictly monotone bijection of $[0,T]$ onto $[0, T- \varepsilon\zeta(T)]$. Define
$$ m^{\varepsilon}_t := m_{t-\varepsilon \zeta(t)} \ldotp$$

\begin{lemma}\label{computationzeta} 
We have
\begin{eqnarray*}
\mathrm B(m^{\varepsilon})
&=& \mathrm B(m) - \int_{T - \varepsilon\zeta(T)}^T\left( \jd |\dot{m}|(t)^2 + \mathcal G(m_t)\right) dt\\
&&+\int_0^{T-\varepsilon\zeta(T)}\left(\jd |\dot{m}|(s)^2 -\mathcal G(m_s)\right)\varepsilon \zeta'(s) ds\\
&& +  \int\Psi d(m_{T-\varepsilon\zeta(T)}-m_T)+ O(\varepsilon^2)\ldotp
\end{eqnarray*}
\end{lemma}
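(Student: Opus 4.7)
The approach would be to read $m^\varepsilon$ as a time reparametrization of $m$, the reparametrization being a near-identity diffeomorphism onto the shortened interval $[0,T-\varepsilon\zeta(T)]$. Setting $s(t) := t-\varepsilon\zeta(t)$, for $\varepsilon$ small enough that $s' = 1-\varepsilon\zeta' > 0$, the map $s$ is a $C^\infty$ diffeomorphism of $[0,T]$ onto $[0,T-\varepsilon\zeta(T)]$ sending $0\mapsto 0$ (since $\zeta\equiv 0$ near $0$) and $T\mapsto T-\varepsilon\zeta(T)$. In particular $m^\varepsilon_T = m_{T-\varepsilon\zeta(T)}$, which at once produces the endpoint term $\int\Psi\,d(m_{T-\varepsilon\zeta(T)} - m_T)$ after subtracting $\int\Psi\,dm_T$.

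For the running cost I would use the reparametrization identity $|\dot m^\varepsilon|(t) = s'(t)|\dot m|(s(t))$ (valid because $s'>0$) and change variables $\sigma=s(t)$. The kinetic integral becomes $\int_0^{T-\varepsilon\zeta(T)} \jd\, s'(t(\sigma))|\dot m|(\sigma)^2\,d\sigma$ and the congestion integral becomes $\int_0^{T-\varepsilon\zeta(T)} \mathcal G(m_\sigma)/s'(t(\sigma))\,d\sigma$. The implicit-function expansion $t(\sigma) = \sigma + \varepsilon\zeta(\sigma) + O(\varepsilon^2)$ then gives $s'(t(\sigma)) = 1 - \varepsilon\zeta'(\sigma) + O(\varepsilon^2)$ and $1/s'(t(\sigma)) = 1 + \varepsilon\zeta'(\sigma) + O(\varepsilon^2)$. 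Plugging in, expanding to first order in $\varepsilon$, and finally splitting $\int_0^{T-\varepsilon\zeta(T)} = \int_0^T - \int_{T-\varepsilon\zeta(T)}^T$ on the leading-order contribution reproduces the first two correction terms stated in the lemma.

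The hard part will be quantifying the $O(\varepsilon^2)$ remainder uniformly: the Taylor remainders in $s'$ and $1/s'$ are pointwise $O(\varepsilon^2)$ in $\sigma$, with constants controlled by $\|\zeta\|_{C^2}$, but they are multiplied by $|\dot m|^2$ or $\mathcal G(m_\sigma)$ before integration, so one needs these two factors to be $L^1$ in time. Minimality of $m$ saves the day: finiteness of $\mathrm B(m)$ forces $\int_0^T |\dot m|^2 < \infty$ and $\int_0^T \mathcal G(m_t) < \infty$, which is precisely what is required. A secondary subtlety is the replacement $\zeta'(t(\sigma)) = \zeta'(\sigma) + O(\varepsilon)$ inside the $\varepsilon$-correction; since it is already paired with a factor of $\varepsilon$ there, it only contributes at order $\varepsilon^2$ and is thus harmless.
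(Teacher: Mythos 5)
Your proposal is exactly the paper's proof: the same reparametrization identity $|\dot m^\varepsilon|(t)=s'(t)\,|\dot m|(s(t))$, the same change of variables $\sigma=s(t)$, the same expansions $s'(t(\sigma))=1-\varepsilon\zeta'(\sigma)+O(\varepsilon^2)$ and $1/s'(t(\sigma))=1+\varepsilon\zeta'(\sigma)+O(\varepsilon^2)$, and the same splitting $\int_0^{T-\varepsilon\zeta(T)}=\int_0^T-\int_{T-\varepsilon\zeta(T)}^T$; your explicit remark that the remainder is controlled because $\int_0^T|\dot m|(t)^2\,dt$ and $\int_0^T\mathcal G(m_t)\,dt$ are finite (a consequence of $\mathrm B(m)<+\infty$) is the only point the paper leaves implicit, and it is correct. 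One caveat: if you actually carry out the expansion you describe, the first-order correction comes out as $\int_0^{T-\varepsilon\zeta(T)}\left(\mathcal G(m_s)-\jd|\dot m|(s)^2\right)\varepsilon\zeta'(s)\,ds$, i.e.\ with the opposite sign to the middle term as printed in the statement (the kinetic factor $1-\varepsilon\zeta'$ lowers that term, the factor $1+\varepsilon\zeta'$ raises the congestion term); the paper's own intermediate line agrees with this and the sign flips only in its final regrouping, so you should state the sign you actually obtain rather than claim to reproduce the printed one --- downstream this merely amounts to changing the sign convention for the constant $D$ and does not affect the subsequent results.
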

\begin{proof}
Observe that if we take $ s = t - \varepsilon \zeta(t),$
then we have
$$ s = t + O(\varepsilon),$$
$$ \varepsilon \zeta'(t) = \varepsilon \zeta'(s) + O(\varepsilon^2),$$
$$ \frac{1}{1 - \varepsilon \zeta'(t)} = 1 + \varepsilon \zeta'(s) + O(\varepsilon^2) \ldotp$$
Now let us calculate $\mathrm B(m^{\varepsilon})$. For notational simplicity, let us set $\ve_1=\ve\zeta(T)$. We have
$$ \mathrm B(m^{\varepsilon}) = \int_0^T \left(\jd |\dot{m}|(s)^2(1 - \varepsilon \zeta'(t))^2 + \mathcal G(m_s)\right) dt + \int\Psi dm_{T-\varepsilon_1} \ldotp$$
A change of variables yields
\begin{eqnarray*}
\mathrm B(m^{\varepsilon}) &=& \int_0^{T-\varepsilon_1}  \left(\jd |\dot{m}|(s)^2(1 - \varepsilon \zeta'(t)) + \mathcal G(m_s)(1 - \varepsilon \zeta'(t))^{-1} \right)ds +  \int\Psi dm_{T-\varepsilon_1} \\
&=& \int_0^{T-\varepsilon_1}  \jd |\dot{m}|(s)^2(1 - \varepsilon \zeta'(t)) + \mathcal G(m_s)(1 + \varepsilon \zeta'(t)) ds + \int\Psi dm_{T-\varepsilon_1}+ O(\varepsilon^2) \\
&=& \mathrm B(m) - \int_{T - \varepsilon_1}^T\left( \jd |\dot{m}|(t)^2 + \mathcal G(m_t)\right) dt\\
&&+\int_0^{T-\varepsilon_1}\left(\jd|\dot{m}|(s)^2 - \mathcal G(m_s)\right)\varepsilon \zeta'(s) ds +  \int\Psi d(m_{T-\varepsilon_1}-m_T)+ O(\varepsilon^2)\ldotp\qedhere
\end{eqnarray*}
\end{proof}

If we apply the computation above with $\zeta \in C_c^{\infty}((0,T))$, we can deduce the following
\begin{corollary}\label{diff=D}
If $m$ minimizes $\mathrm{B}$, then the quantity 
$$-\jd|\dot{m}|(t)^2 + \mathcal G(m_t)$$
is constant in time.
\end{corollary}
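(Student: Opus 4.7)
The plan is to apply Lemma \ref{computationzeta} with a test function $\zeta \in C_c^{\infty}((0,T))$ and extract the vanishing of the first-order term in $\varepsilon$ from the optimality of $m$.

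First I would note that if $\zeta$ is compactly supported in $(0,T)$, then in particular $\zeta$ vanishes near $0$ (so the competitor $m^{\varepsilon}$ still satisfies $m^{\varepsilon}_0 = \overline{m_0}$ and is admissible) and $\zeta(T) = 0$, which means $\varepsilon_1 = \varepsilon\zeta(T) = 0$ in the notation of Lemma \ref{computationzeta}. Consequently the two terms
\[
-\int_{T-\varepsilon_1}^{T}\!\!\left(\tfrac12 |\dot m|(t)^2+\mathcal G(m_t)\right)dt \qquad \text{and}\qquad \int\Psi\,d(m_{T-\varepsilon_1}-m_T)
\]
in the expansion both vanish identically, and I am left with
\[
\mathrm B(m^{\varepsilon}) - \mathrm B(m) = \varepsilon\int_0^T\!\left(\tfrac12|\dot m|(s)^2 - \mathcal G(m_s)\right)\zeta'(s)\,ds + O(\varepsilon^2).
\]
Next, since $\zeta$ is compactly supported in $(0,T)$, the map $t\mapsto t-\varepsilon\zeta(t)$ is a bijection of $[0,T]$ onto itself for $|\varepsilon|$ small enough (both sides of $\varepsilon=0$), so $m^{\varepsilon}$ is admissible for both positive and negative $\varepsilon$. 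The minimality of $m$ then gives $\mathrm B(m^{\varepsilon})\geq \mathrm B(m)$ in both cases; dividing by $\varepsilon$ and letting $\varepsilon\to 0^+$ and $\varepsilon\to 0^-$ forces the linear coefficient to vanish:
\[
\int_0^T \left(\tfrac12|\dot m|(s)^2 - \mathcal G(m_s)\right)\zeta'(s)\,ds = 0 \qquad \text{for every } \zeta\in C_c^{\infty}((0,T)).
\]
Finally, this identity says precisely that the distributional derivative of the scalar function $s\mapsto \tfrac12|\dot m|(s)^2 - \mathcal G(m_s)$ vanishes on $(0,T)$, so the function is a.e. constant there, which is equivalent to the claim on $-\tfrac12|\dot m|(t)^2 + \mathcal G(m_t)$.

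The only potentially delicate step is the control of the $O(\varepsilon^2)$ remainder uniformly in $\zeta$ when we take $\varepsilon\to 0$ for a fixed $\zeta$: but since $\zeta$ is fixed once and for all and smooth with compact support in $(0,T)$, the constants in the $O(\varepsilon^2)$ of Lemma \ref{computationzeta} are harmless, and the limit $\varepsilon\to 0$ passes directly. No regularity at $t=T$ or boundary analysis is needed here, because the compact support of $\zeta$ removes all the boundary terms from the outset.
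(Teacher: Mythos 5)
Your proposal is correct and follows essentially the same route as the paper: apply Lemma \ref{computationzeta} with $\zeta\in C_c^{\infty}((0,T))$ (so $\zeta(T)=0$ kills the boundary terms), use that $m^{\varepsilon}$ is admissible for both signs of small $\varepsilon$ together with the minimality of $m$ to make the first-order term vanish, and conclude constancy from the vanishing of $\int_0^T\bigl(\tfrac12|\dot m|^2-\mathcal G(m_s)\bigr)\zeta'\,ds$ for all test functions. Your write-up merely makes explicit some points the paper leaves implicit (admissibility for $\varepsilon<0$, the distributional-derivative conclusion), which is fine.
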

\begin{proof}
We use the computation of Lemma \ref{computationzeta} with $\zeta \in C_c^{\infty}((0,T))$ (in particular, $\zeta(T) = 0$), with 
$\varepsilon$ not necessarily positive, but still small. This yelds, for the corresponding $\tilde{m^{\varepsilon}}$,
$$\mathrm B(\tilde{m^{\varepsilon}}) = \mathrm B(m) + \int_0^{T} \left(\jd|\dot{m}|(s)^2 - \mathcal G(m_s)\right)\varepsilon \zeta '(s) ds + O(\varepsilon^2).$$
We can differentiate with respect to $\varepsilon$ and the optimality of $m$ gives
$$ \int_0^{T} (\jd |\dot{m}|(s)^2 - \mathcal G(m_s)) \zeta '(s) ds = 0\ldotp$$
This means that the difference $-\jd|\dot{m}|(s)^2 + \mathcal G(m_s)$ is constant.
\end{proof}
From now on, we will write
$$-\jd|\dot{m}|(t)^2 + \mathcal G(m_t)=D.$$

This allows us to rewrite the conclusions of Lemma \ref{computationzeta} as follows
\begin{eqnarray}\label{withD}
 \mathrm B(m^{\varepsilon}) &=& \mathrm B(m) - \int_{T - \varepsilon_1}^T \left(\jd |\dot{m}|(t)^2 + \mathcal G(m_t) \right)dt \\
 &&-\varepsilon D\zeta(T-\ve_1) +  \int\Psi d(m_{T-\varepsilon_1}-m_T)+ O(\varepsilon^2),\notag
 \end{eqnarray}
where we have used the fact that $\int_0^{T-\varepsilon_1} \zeta'(t) dt = \zeta(T-\ve_1) $.

We deduce now another consequence of the computations of Lemma \ref{computationzeta}.
\begin{lemma}\label{lastprep}
If $m$ is a minimiser of $\mathrm B$ then, with the notations above, we have
$$D  +\mathcal G(m_T) \leq  \jd \int_{\tori^d} |\nabla \Psi|^2 dm_T.$$
In particular,  $\mathcal G(m_T)<+\infty $ and $m_T$ it is absolutely continuous with respect to the Lebesgue measure.
\end{lemma}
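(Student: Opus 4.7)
The plan is to exploit optimality of $m$ against the time-reparametrised perturbations $m^\ve$ of Lemma \ref{computationzeta}, combining the expansion \eqref{withD} with the constancy $-\jd|\dot m|(t)^2+\mathcal G(m_t)=D$ from Corollary \ref{diff=D}. In contrast with Corollary \ref{diff=D}, I would now choose $\zeta$ with $\zeta(T)\ne 0$, so that the perturbation genuinely cuts off a small interval at the right endpoint and the terminal payoff $\int\Psi\,dm_T$ enters the game. This is what will eventually produce the claimed $\jd\int|\nabla\Psi|^2\,dm_T$ bound.

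Concretely, I would fix $\zeta\in C^\infty([0,T])$ non-negative, vanishing near $0$, normalised to $\zeta(T)=1$ (so $\ve_1=\ve$), and use $\mathrm B(m^\ve)\ge \mathrm B(m)$ for small $\ve>0$. Writing $\jd|\dot m|(t)^2+\mathcal G(m_t)=2\mathcal G(m_t)-D$ and absorbing the $O(\ve^2)$ term $\ve D(1-\zeta(T-\ve))$ coming from $\zeta(T-\ve)=1+O(\ve)$, equation \eqref{withD} simplifies to
$$\int_{T-\ve}^T 2\mathcal G(m_t)\,dt \;\leq\; \int\Psi\,d(m_{T-\ve}-m_T) + O(\ve^2).$$
Then I would rewrite the boundary term via the continuity equation: since $\Psi\in C^{1,1}(\tori^d)$ is Lipschitz, $t\mapsto\int\Psi\,dm_t$ is absolutely continuous with derivative $\int\nabla\Psi\cdot v_t\,dm_t$, so Young's inequality with $v$ the minimal velocity (so that $\int\jd|v_t|^2\,dm_t=\jd|\dot m|(t)^2$) gives
$$\int\Psi\,d(m_{T-\ve}-m_T) = -\int_{T-\ve}^T\!\!\int\nabla\Psi\cdot v_t\,dm_t\,dt \leq \int_{T-\ve}^T\!\!\left(\jd\!\int|\nabla\Psi|^2\,dm_t + \jd|\dot m|(t)^2\right)dt.$$
Substituting $\jd|\dot m|(t)^2=\mathcal G(m_t)-D$ once more turns the previous inequality into
$$\int_{T-\ve}^T\mathcal G(m_t)\,dt + \ve D \;\leq\; \int_{T-\ve}^T\jd\!\int|\nabla\Psi|^2\,dm_t\,dt + O(\ve^2).$$

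To conclude, I would divide by $\ve$ and let $\ve\to 0$. The right-hand side tends to $\jd\int|\nabla\Psi|^2\,dm_T$ because $t\mapsto m_t$ is continuous in $\mathbb W_2(\tori^d)$ (hence weakly-*) and $|\nabla\Psi|^2$ is continuous and bounded. The left-hand side is controlled from below by weak-* lower semicontinuity of $\mathcal G$, which forces $\liminf_{t\to T}\mathcal G(m_t)\ge \mathcal G(m_T)$ and therefore $\liminf_{\ve\to 0}\tfrac{1}{\ve}\int_{T-\ve}^T\mathcal G(m_t)\,dt\ge \mathcal G(m_T)$. Combining gives $D+\mathcal G(m_T)\le \jd\int|\nabla\Psi|^2\,dm_T$; since $D$ is a finite constant (because $\mathrm B(m)<\infty$ makes $\mathcal G(m_t)$ and $|\dot m|(t)^2$ finite for a.e.\ $t$) and the right-hand side is bounded by $\jd\|\nabla\Psi\|_\infty^2<\infty$, we deduce $\mathcal G(m_T)<\infty$, which by definition of $\mathcal G$ is exactly $m_T\ll\lcal^d$. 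The delicate point I expect is this last liminf passage: a priori $\mathcal G(m_t)$ could blow up along a sequence $t\to T$, but the weak-* lower semicontinuity of $\mathcal G$ combined with the finite upper bound we just obtained actually rules this out, and the argument closes.
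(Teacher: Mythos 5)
Your proof is correct, and its skeleton is the same as the paper's: the perturbation $m^\ve_t=m_{t-\ve\zeta(t)}$ with $\zeta(T)=1$, the expansion \eqref{withD}, optimality of $m$, a Young inequality, and lower semicontinuity of $\mathcal G$ at $t=T$. Where you genuinely diverge is in the two technical estimates. For the terminal term $\int\Psi\,d(m_{T-\ve}-m_T)$ the paper takes the optimal plan $\gamma$ between $m_T$ and $m_{T-\ve}$, Taylor-expands $\Psi$ to second order, bounds $\int|y-x|^2\,d\gamma=W_2^2(m_{T-\ve},m_T)\le\ve\int_{T-\ve}^T|\dot m|(t)^2\,dt$ by Cauchy--Schwarz, and cancels the quadratic terms against the kinetic part through an $\ve$-weighted Young inequality; you instead differentiate $t\mapsto\int\Psi\,dm_t$ along the curve via the continuity equation with the minimal velocity field and invoke Corollary \ref{diff=D} twice, writing $\jd|\dot m|(t)^2=\mathcal G(m_t)-D$, to absorb the kinetic terms. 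This is a legitimate shortcut: it dispenses with the transport plan, and for this lemma it only needs $\nabla\Psi$ continuous and bounded (so $\Psi\in C^1$ on the torus suffices), whereas the paper's Taylor remainder uses $C^{1,1}$ --- which is in any case assumed later. For the final passage the paper averages, applying Jensen to $m_{T,\ve}=\frac1\ve\int_{T-\ve}^T m_t\,dt$ and semicontinuity along $m_{T,\ve}\weakto m_T$, while you use semicontinuity pointwise along the curve ($\liminf_{t\to T}\mathcal G(m_t)\ge\mathcal G(m_T)$) and then pass to the liminf of the averages; both are fine, and your closing worry is unnecessary, since a blow-up of $\mathcal G(m_t)$ near $T$ can only strengthen a lower bound. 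Two points you should state explicitly to close the argument: the identity $\jd\int|v_t|^2\,dm_t=\jd|\dot m|(t)^2$ requires the measurable-in-$t$ selection of the minimal field (as in the proposition recalled in Section \ref{SecTime}), and the absolute continuity of $t\mapsto\int\Psi\,dm_t$ up to $t=T$, with the fundamental theorem of calculus on $[T-\ve,T]$, uses the identification of $m$ with a $W_2$-continuous curve on all of $[0,T]$.
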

\begin{proof}
We will use Lemma \ref{computationzeta} and formula \eqref{withD}
with a function $\zeta$ identically equal to $1$ on a neighbourhood of $T$. In this case we have
\begin{equation}\label{Bmve}
\mathrm{B}(m^{\varepsilon}) = \mathrm{B}(m) - \int_{T-\varepsilon}^T \left(\jd |\dot{m}|(t)^2 + \mathcal G(m_t)\right)dt - \varepsilon D + \int_{\tori^d}\Psi d(m_{T-\varepsilon}-m_T) + O(\varepsilon^2) \ldotp
\end{equation}
Let us consider the optimal transport plan $\gamma$ between $m_T$ and $m_{T-\varepsilon}$. It is a probability measure on  $\tori^d\times\tori^d$.
By a Taylor expansion on $\Psi$, we get
\begin{equation}\label{expPsi}
\int_{\tori^d}\Psi d(m_{T-\varepsilon}-m_T) \leq \int \nabla\Psi(x)\cdot (y-x) d\gamma+\frac C2\int |y-x|^2 d\gamma.
\end{equation}
%
Moreover, by the definition of the $2$-Wasserstein distance and the fact that $\gamma$ is optimal, we have
$$ \int |y-x|^2 d\gamma=  W_2^2(m_{T-\varepsilon},m_T) \leq  \left(\int_{T-\varepsilon}^T |\dot{m}|(t)dt\right)^2 \leq  \varepsilon  \int_{T-\varepsilon}^T |\dot{m}|(t)^2 dt, $$
where we have applied the Cauchy-Schwarz inequality in the last step. From this we have
\begin{equation}\label{gammamdot}
  \frac 1\ve \int |y-x|^2 d\gamma\leq  \int_{T-\varepsilon}^T |\dot{m}|(t)^2 dt.
\end{equation}
Inserting \eqref{expPsi} and \eqref{gammamdot} into \eqref{Bmve} and using the optimality of $m$, we get

$$\jd \frac 1\ve \int |y-x|^2 d\gamma +  \int_{T-\varepsilon}^T \mathcal G(m_t) dt + \ve D \leq  \int \nabla\Psi(x)\cdot (y-x) d\gamma+\frac C2\int |y-x|^2 d\gamma + O(\varepsilon^2) .$$
We apply Young inequality to get 
$$ \int \nabla\Psi(x)\cdot (y-x) d\gamma\leq \frac12 \frac{\ve}{1-C\ve}\int |\nabla\Psi(x)|^2 d\gamma+\frac12 \left(\frac 1\ve-C\right)\int |y-x|^2 d\gamma.$$
This gives
\begin{equation}\label{adiviser}
  \int_{T-\varepsilon}^T \mathcal G(m_t) dt + \ve D\leq   \frac12 \frac{\ve}{1-C\ve}\int |\nabla\Psi(x)|^2 d\gamma=  \frac12 \frac{\ve}{1-C\ve}\int |\nabla\Psi|^2 dm_T+ O(\varepsilon^2). 
\end{equation}

We now use the convexity of $\mathcal G$, which gives
\begin{equation}\label{mTve}
\mathcal G\left(  \frac{1}{\varepsilon} \int_{T-\varepsilon}^Tm_t dt\right)\leq\frac{1}{\varepsilon} \int_{T-\varepsilon}^T \mathcal G(m_t) dt.
  \end{equation}
For notational simplicity, we write $m_{T,\ve}$ for $ \frac{1}{\varepsilon} \int_{T-\varepsilon}^Tm_t dt$. Dividing \eqref{adiviser} by $\ve$ and inserting \eqref{mTve}, we have
$$\mathcal G(m_{T,\ve})+D\leq \frac{1}{2-2C\ve}\int |\nabla\Psi|^2 dm_T+ O(\varepsilon).$$ 
Since $m_t$ converge weakly in the sense of measures to $m_T$ with $t \rightarrow T$ so do the measures $m_{T,\ve}$ as $\ve\to 0$. Hence, we can take the $\liminf$ as $\ve\to 0$ of our last inequality and, using the semicontinuity of $\mathcal G$,
we get 
$$\mathcal G(m_T)+D\leq \frac{1}{2}\int |\nabla\Psi|^2 dm_T.\qedhere$$
\end{proof}

\begin{remark}\label{RemW2q'}
We note that this result allows us to weaken the assumptions on $\Psi$ necessary to prove that $\delta\mapsto \int\Psi(x-\delta)m(T,x)dx$ is $C^{1,1}=W^{2,\infty}$. Indeed, this function is a convolution, and if $m_T\in L^q$ then we only need $D^2\Psi \in L^{q'}$ in order to guarantee that their convolution is $L^\infty$. This means that, whenever $G(m)\geq c_0m^q-c_1$, i.e. it has $q$-growth, then $\Psi\in W^{2,q'}$ is enough for Theorem \ref{space-thm}.
\end{remark}

\begin{remark}
We observe that the term $\int_{\tori^d} |\nabla \Psi|^2 dm_T$ in the statement of the above lemma is the square of the slope, and also of the local lipschitz constant, in the $\mathbb W_2$ sense (see \cite{AmbGigSav,Gigli memoirs}), of the functional $m\mapsto \int_{\tori^d} \Psi dm$, computed at $m=m_T$. We recall the definitions of the slope for a functional $F$ defined on a metric space
$$|\nabla^- F|(x):=\limsup_{y\to x}\frac{[F(x)-F(y)]_+}{d(x,y)}$$
and of the local Lipschitz constant
$$\mathrm{lip}F(x):=\limsup_{y\to x}\frac{|F(x)-F(y)|}{d(x,y)}.$$
Indeed, a similar result to the one we presented could be obtained for optimal curves in arbitrary metric space: we can prove, under suitable assumptions, that the minimizers of $\int_0^T( \jd |\dot\omega|^2(t)+G(\omega(t)))dt+F(\omega(T))$ satisfy $ -\jd |\dot\omega|^2(t)+G(\omega(t))=D$ and $D+G(\omega(T))\leq \jd(\mathrm{lip}F(\omega(T)))^2$. Here we preferred to exploit the explicit structure of our functionals to avoid using and checking general definitions.
\end{remark}
\bigskip

We now proceed to the key part of this section. We want to consider $m^{\varepsilon}_t:= m_{t + \varepsilon \zeta(t)}$ for $\varepsilon > 0$ and with $\zeta(T) = 1$ (pay attention to the change in the sign!). However, to do so we need to extend $m$ onto $[T, T+\varepsilon]$. To this end we consider, for $\delta \in [0,\varepsilon]$ and any Lipschitz vector field $v$, the function 
$$ R_{\delta} (x) := x - \delta v(x)$$ 
and we let
$$ m_{T + \delta} := (R_{\delta})_\#m_T \ldotp$$
Observe that once we fix $v$ then for small $\varepsilon$ the function $x - \delta v(x)$ is a Lipschitz homeomorphism for all $\delta \in [0,\varepsilon]$. Moreover
$$ m^{\varepsilon}_{T - \varepsilon + \delta}(R_\delta(x)) =m_{T + \delta}(R_\delta(x))=  \frac{m_T(x)}{\text{det} (\text{Id} - \delta Dv(x))}\quad a.e. \ldotp$$
The term in the denominator above is the Jacobian of the change of variables $ x \rightarrow R_{\delta}(x)$. Furthermore
$$ \text{det} (\text{Id} - \delta Dv) = 1 - \delta(\nabla\cdot v) + O(\delta^2) \ldotp$$

In order to prove our next proposition we need to introduce another assumption on $G$, namely
\begin{description}
\item[(Hpol)--] There exist two numbers $C,a_0>0$ such that, for all $m>0$ and $a<a_0$, the function $G$ satisfies the following inequality
$$G((1+a)m)\leq (1+Ca)G(m)+C.$$
\end{description}
Note that this assumption is satisfied by all convex functions $G$ satisfying bounds of the form $c_1m^q-c_2\leq G(m)\leq c_3m^q+c_4$ for $q\geq 1$, and also by $G(m)=m\log m$.
\begin{proposition}\label{propTranslateM}
Suppose that $\Psi \in C^{1,1}$ and that $G$ satisfies (Hpol). Then, if we define $m^\ve$ as above, and choose $v=\nabla \Psi$, there exists a finite constant $C$ independent of $\varepsilon$ such that for $\varepsilon$ sufficiently small we have
$$|\mathrm{B}(m) - \mathrm{B}(m^\ve)| \leq C|\varepsilon|^2 \ldotp$$
\end{proposition}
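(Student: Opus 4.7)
The plan is to adapt the computation of Lemma \ref{computationzeta} to the forward-shifted curve $m^\ve_t = m_{t+\ve\zeta(t)}$ on $[0,T]$, where $\zeta(T) = 1$ forces extending $m$ onto $[T, T+\ve]$ by the pushforward $m_{T+\delta} = (R_\delta)_\#m_T$ with $R_\delta = \mathrm{id} - \delta\nabla\Psi$. Performing the change of variable $s = t + \ve\zeta(t)$ that sends $[0,T]$ onto $[0, T+\ve]$, expanding $\tfrac{1}{1+\ve\zeta'(t)} = 1 - \ve\zeta'(t) + O(\ve^2)$, and using Corollary \ref{diff=D} to rewrite $\ve\int_0^T\zeta'(s)(\tfrac12|\dot m|^2 - \mathcal{G}(m_s))ds = -D\ve$ (the total $\zeta'$-integral equals $\zeta(T)-\zeta(0)=1$), I will obtain the decomposition
\begin{equation*}
\mathrm{B}(m^\ve) - \mathrm{B}(m) = -D\ve + \int_T^{T+\ve}\bigl(\tfrac12|\dot m|^2(s) + \mathcal{G}(m_s)\bigr)ds + \int\Psi\,d(m_{T+\ve} - m_T) + O(\ve^2).
\end{equation*}

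Next I will estimate each of the three boundary contributions using the specific choice $v = \nabla\Psi$. The Lagrangian velocity $w(\delta, y) = -\nabla\Psi(\phi_\delta^{-1}(y))$ associated to the extension satisfies $\int|w|^2 dm_{T+\delta} = \int|\nabla\Psi|^2 dm_T$ after the pushforward change of variable, which gives $\int_T^{T+\ve}\tfrac12|\dot m|^2\,ds \leq \tfrac\ve 2\int|\nabla\Psi|^2 dm_T$. The potential cost is controlled by combining the Jacobian expansion $J := \det D\phi_\delta = 1 - \delta\Delta\Psi + O(\delta^2)$, the identity $\int G(m_{T+\delta})\,dy = \int G(m_T/J)J\,dx$, and hypothesis (Hpol) applied with $a = \delta\Delta\Psi + O(\delta^2)$; this yields $\mathcal{G}(m_{T+\delta}) \leq \mathcal{G}(m_T) + O(\delta)$ (crucially using $\mathcal{G}(m_T) < \infty$ from Lemma \ref{lastprep}), hence $\int_T^{T+\ve}\mathcal{G}(m_s)\,ds \leq \ve\mathcal{G}(m_T) + O(\ve^2)$. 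Finally, a Taylor expansion of $\Psi \in C^{1,1}$ along $R_\ve$ gives $\int\Psi\,d(m_{T+\ve} - m_T) = -\ve\int|\nabla\Psi|^2 dm_T + O(\ve^2)$.

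Substituting these three estimates, the linear-in-$\ve$ contribution collects into $\ve\bigl[\mathcal{G}(m_T) - D - \tfrac12\int|\nabla\Psi|^2 dm_T\bigr]$, which by Lemma \ref{lastprep} is $\leq -2D$; so $\mathrm{B}(m^\ve) - \mathrm{B}(m) \leq -2D\ve + O(\ve^2)$. The optimality of $m$ provides the matching lower bound $\mathrm{B}(m^\ve) - \mathrm{B}(m) \geq 0$, and comparing the two as $\ve \to 0^+$ forces both $D \leq 0$ and the cancellation of the $O(\ve)$ coefficient (the inequality of Lemma \ref{lastprep} becomes an equality along this direction of variation), giving $|\mathrm{B}(m^\ve) - \mathrm{B}(m)| \leq C\ve^2$. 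The main obstacle is precisely this last cancellation: the one-sided estimate of Lemma \ref{lastprep} does not suffice by itself and must be paired with the optimality of $m$ against the forward-shifted admissible competitor $m^\ve$ to pin down the linear-in-$\ve$ coefficient; a secondary technical point is verifying that the integrated (Hpol) estimate for $\int_T^{T+\ve}\mathcal{G}(m_s)\,ds$ does not leak an extra factor of $\ve$ from the additive constant in (Hpol).
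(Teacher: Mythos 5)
Your strategy is the same as the paper's: shift forward in time with $\zeta(T)=1$, extend $m$ past $T$ by pushing $m_T$ forward along $R_\delta=\mathrm{id}-\delta\nabla\Psi$, estimate the kinetic, congestion and final-cost contributions on $[T,T+\ve]$, and conclude from Lemma \ref{lastprep} plus the optimality of $m$. The three boundary estimates are fine (your use of $\mathcal G(m_T)<+\infty$ from Lemma \ref{lastprep} is indeed needed, and the worry you flag about the additive constant in (Hpol) applies equally to the paper's own proof, which asserts the same $O(\delta)$ bound, so it is not a gap specific to your argument). The genuine gap is the last step. From the two facts you assemble, namely $0\le \mathrm B(m^\ve)-\mathrm B(m)$ and $\mathrm B(m^\ve)-\mathrm B(m)\le A\ve+C\ve^2$ with $A=\mathcal G(m_T)-D-\tfrac12\int|\nabla\Psi|^2dm_T\le -2D$, you can only deduce $A\ge 0$, hence $D\le 0$; nothing forces the cancellation of the linear term. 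If $D<0$ were allowed you would only obtain a bound of order $\ve$, and the parenthetical claim that ``the inequality of Lemma \ref{lastprep} becomes an equality along this direction of variation'' is precisely what is not proved: the needed input is a one-sided inequality $A\le 0$, which cannot come from the same forward-shift optimality you have already used.

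The root of the problem is a sign mismatch, worth spelling out because the paper itself is inconsistent here. Your expansion is the correct one: since $\tfrac12|\dot m|^2(1\pm\ve\zeta')+\mathcal G(1\mp\ve\zeta')=\tfrac12|\dot m|^2+\mathcal G\pm\ve\zeta'\bigl(\tfrac12|\dot m|^2-\mathcal G\bigr)$, the forward shift produces the correction $-D\ve$, as you wrote (the displayed correction term in Lemma \ref{computationzeta} has the opposite sign). But then the \emph{same} corrected bookkeeping, run through the backward-shift argument proving Lemma \ref{lastprep}, yields $\mathcal G(m_T)-D\le\tfrac12\int_{\tori^d}|\nabla\Psi|^2dm_T$, i.e.\ $D$ enters that lemma with the sign opposite to the paper's display. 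A quick sanity check: on $\tori^d$ with $\overline{m_0}\equiv1$, $G(m)=m^2/2$, $\Psi\equiv0$, the minimizer is $m_t\equiv1$, so $D=\mathcal G(m_T)=1/2>0$; the displayed $D+\mathcal G(m_T)\le\tfrac12\int|\nabla\Psi|^2dm_T$ fails, the corrected version holds with equality, and your deduced ``$D\le0$'' also fails, confirming that the mixed system you assembled (correct decomposition, literally quoted lemma) is inconsistent. Once you invoke the version of Lemma \ref{lastprep} consistent with your own signs, the linear coefficient satisfies $A=\mathcal G(m_T)-D-\tfrac12\int|\nabla\Psi|^2dm_T\le0$ at once, and the proof closes exactly as in the paper: $0\le\mathrm B(m^\ve)-\mathrm B(m)\le A\ve+C\ve^2\le C\ve^2$. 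So no new idea is missing, but as written the final inference is a non sequitur; you must re-derive Lemma \ref{lastprep} with your sign conventions rather than appeal to optimality to ``force'' an equality.
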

\begin{proof}
Calculating as before we get
$$\mathrm{B}(m^{\varepsilon}) = \mathrm{B}(m) + \varepsilon D +  \int_{T}^{T+\varepsilon}\jd |\dot{m}|(t)^2dt +\int_{T}^{T+\varepsilon} \mathcal G(m_t) dt +\int \Psi d(m_{T+\varepsilon}-m_T) + O(\varepsilon^2) \ldotp$$
We now need to estimate the different terms of this sum.

Let us start from
$\jd \int_{T}^{T+\varepsilon} |\dot{m}|(t)^2 dt$. Here we know that the curve $m$ is obtained by moving particles with constant speed $v$ (in Lagrangian coordinates), even though we do not know if this $v$ is the optimal vector field in the continuity equation. Hence, we have 
$$ |\dot{m}|(t)\leq ||v||_{L^2(m_T)},$$ 
which gives
$$\jd \int_{T}^{T+\varepsilon} |\dot{m}|(t)^2 dt \leq \jd \varepsilon \int_{\tori^d} |v(x)|^2 dm_T \ldotp$$

For the second integral note that, as stated before, at any fixed time $T + \delta$ we have, by the definition of a push-forward measure,
$$\int_{\tori^d} G(m(T+\delta)) dx = \int_{\tori^d} \frac{G(m(T+\delta))}{m(T+\delta)} dm(T+\delta) =  \int_{\tori^d} \frac{G(m(T+\delta))}{m(T+\delta)} (R_{\delta}(x)) dm_{T}.$$
Here we note that $s\mapsto G(s)/s$ is an increasing function (since $G$ is convex and $G(0)=0$) and 
we use Assumption (Hpol) to get
$$\int_{\tori^d} G(m_{T+\delta}) dx =  \int_{\tori^d} G(m_T) dx + O(\delta) \ldotp$$
Integrating this on $[T, T+\varepsilon]$ yields
$$ \int_{T}^{T+\varepsilon} G(m_t) dt = \varepsilon \int_{\tori^d} G(m_T) dx + O(\varepsilon^2) \ldotp$$

We now look at the last term, for which we have
$$\int \!\Psi d(m_{T+\varepsilon}-m_T)=\int_{\tori^d} \!(\Psi(x - \varepsilon v(x)) - \Psi(x)) dm_T = \int_{\tori^d} \!\nabla \Psi(x)\cdot (-\varepsilon v(x)) dm_T + O(\varepsilon^2),$$
where we used a second-order Taylor expansion for $\Psi$.

Putting this all together gives
$$\mathrm{B}(m^{\varepsilon}) \leq \mathrm{B}(m) + \varepsilon D + \jd \varepsilon \int_{\tori^d} |v|^2 dm_T +  \varepsilon  \mathcal G(m_T) -\ve \int_{\tori^d} \nabla \Psi(x)\cdot v(x) dm_T + O(\varepsilon^2) \ldotp$$
Choosing $v(x) := \nabla \Psi(x)$ (which is an admissible choice because of the regularity assumptions on $\Psi$) this gives
$$ \mathrm{B}(m^{\varepsilon}) \leq \mathrm{B}(m) + \varepsilon D + \jd \varepsilon \int_{\tori^d} |\nabla \Psi|^2 dm_T + \varepsilon \mathcal G(m_T)  - \varepsilon \int_{\tori^d} |\nabla \Psi|^2 dm_T + O(\varepsilon^2) = $$
\begin{equation}\label{eqEstimateFinish} = \mathrm{B}(m) + \varepsilon\left( D - \jd \int_{\tori^d} |\nabla \Psi|^2 dm_T + \mathcal G(m_T)\right) + O(\varepsilon^2) \ldotp
\end{equation}
The claim is proven using Lemma \ref{lastprep} and the 
 fact that necessarily $\mathrm{B}(m) \leq \mathrm{B}(m^{\varepsilon})$.
\end{proof}

This allows us to conclude the following:
\begin{theorem}
If $(m,v)$ is a solution to the primal problem $\min\mathcal B$, if $\Psi\in C^{1,1}$ and $G$ satisfies the assumptions (Hsuper) and (Hpol), if $\Omega=\tori^d$ and if $J$ is defined through \eqref{QP}, then $J(m) \in H^1_{loc}((0,T]\times \tori^d)$.
\end{theorem}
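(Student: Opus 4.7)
The plan is to mirror the argument of Theorem \ref{space-thm}, but with time-translation in place of space-translation. Since Theorem \ref{space-thm} already handles the spatial direction, yielding $J(m)\in L^2_{\loc}((0,T];H^1(\tori^d))$, it remains only to establish a matching time-derivative estimate extending up to $t=T$; the two then combine to give the claim.

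For the time direction, fix $t_1>0$ and a smooth cut-off $\zeta:[0,T]\to[0,1]$ vanishing near $t=0$ and equal to $1$ on $[t_1,T]$, so in particular $\zeta(T)=1$, and set $m^\ve_t := m_{t+\ve\zeta(t)}$, where for $s\in[T,T+\ve]$ the values $m_s$ are those defined by $(R_\delta)_\# m_T$ with $R_\delta(x)=x-\delta\nabla\Psi(x)$, as in Proposition \ref{propTranslateM}. With the corresponding velocity field $v^\ve$, the pair $(m^\ve,v^\ve)$ is admissible in $\mathcal H$, and Proposition \ref{propTranslateM} provides the bound $|\mathrm B(m^\ve)-\mathrm B(m)|\leq C\ve^2$ for all small $\ve>0$.

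Next, proceed exactly as in the proof of Theorem \ref{space-thm}. Take a minimizing sequence $(u_n,p_n)\in\mathcal D$ for the dual problem, so that Theorem \ref{thmFirstDuality2} gives $\mathcal A(u_n,p_n)+\mathcal B(m,v)\leq 1/n$; Lemma \ref{lemmaVariationalRegularity} combined with \eqref{QP} then yields simultaneously
$$c\|J(m)-J_*(p_n)\|^2_{L^2}\leq 1/n \qquad\text{and}\qquad c\|J(m^\ve)-J_*(p_n)\|^2_{L^2}\leq 1/n+C\ve^2.$$
A triangle inequality followed by $n\to\infty$ produces $\|J(m^\ve)-J(m)\|^2_{L^2([0,T]\times\tori^d)}\leq C\ve^2$. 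Restricted to $[t_1,T]$, where $\zeta\equiv 1$ so that $m^\ve_t=m_{t+\ve}$ (using the extension of $m$ on $[T-\ve,T]$), this is precisely the finite-difference characterisation of $\partial_t J(m)\in L^2([t_1,T]\times\tori^d)$. Combined with the spatial estimate of Theorem \ref{space-thm}, this gives $J(m)\in H^1([t_1,T]\times\tori^d)$ for every $t_1>0$, hence $J(m)\in H^1_{\loc}((0,T]\times\tori^d)$.

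The main obstacle, already absorbed by Proposition \ref{propTranslateM} and Lemma \ref{lastprep}, is the linear-in-$\ve$ contribution that would otherwise appear when translating past $t=T$: contributions coming from the endpoint cost $\int\Psi\,dm_T$, from the extra mass of $\mathcal G(m_t)$ on $[T,T+\ve]$, and from the extra kinetic energy there. The choice to extend $m$ beyond $T$ along the flow of $\nabla\Psi$ collects these into $\ve\bigl(D+\mathcal G(m_T)-\tfrac12\int|\nabla\Psi|^2\,dm_T\bigr)$, which is non-positive by Lemma \ref{lastprep}; only the quadratic remainder then survives. Without this cancellation, the bound would be of order $\ve$ rather than $\ve^2$ and would be insufficient to conclude $H^1$ regularity.
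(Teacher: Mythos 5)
Your proposal is correct and follows essentially the same route as the paper, whose proof of this theorem is precisely the combination you describe: the duality/minimizing-sequence argument of Theorem \ref{space-thm} applied to the time translation $m^\ve_t=m_{t+\ve\zeta(t)}$, with the $O(\ve^2)$ energy bound supplied by Proposition \ref{propTranslateM} (itself resting on Lemma \ref{lastprep}), together with the already established spatial estimate. The only cosmetic point is that the finite-difference bound should be read on $[t_1,T-\ve]$, where $m^\ve_t=m_{t+\ve}$ involves only the original curve, which is enough for the difference-quotient characterisation of $\partial_t J(m)\in L^2$ up to $t=T$.
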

\begin{proof}
The proof follows from the strategy described in Sections 3 and 4, using Proposition \ref{propTranslateM}. This proves the estimates on the time-derivative. The space-derivatives are already estimated thanks to Theorem \ref{space-thm}.
\end{proof}

\begin{remark}
Among the consequences of these $H^1$ regularity results, we insist on the summability improvement. For instance, whenever the penalization is $G(m)=m^q$, the fact that the optimal $m$ satisfies $m\in L^q([0,T]\times \tori^d)$ is straightforward. However, with $J(m)=m^{q/2}$ and using the injection $H^1\subset L^{2^*}$, we also get $m\in L^{q2^*/2}$. This is also important in the case $G(m)=m\log m-m$, where one obtains $m\in L^{2^*/2}$: this is especially useful when one needs to exit the space $L^1$, where many properties lack (see, for instance, \cite{BenCarSan} or \cite{Pierre}, to see applications where integrability properties of the maximal function are required). Note, however, that the exponent $2^*$ should be computed here w.r.t. to the dimension $d+1$ of $[0,T]\times \tori^d$; on the other hand, it is possible to use Theorem \ref{space-thm} and obtain $J(m)\in L^2_{loc}((0,T]; L^{2^*}( \tori^d))$, where here $2^*$ is computed only using the space variable (but the integrability in time is not improved).
\end{remark}

 \paragraph{Acknowledgments}
 The first author worked on this topic during his master studies at Ecole Polytechnique, funded by a Scholarship by Fondation Mathématique Jacques Hadamard, whose support is acknowledged.
 The second author acknowledges the support of the ANR project ISOTACE (ANR-12-MONU-0013) and of the iCODE project ``Strategic Crowds'' funded by IDEX Paris-Saclay, and warmly thanks the organizers of International Conference on Stochastic Analysis and Applications, Hammamet, October 2015, for the opportunity to present the results and publish them.

\end{document}